\newtheorem{thm}{Theorem}
\newtheorem{prop}{Proposition}
\newtheorem{cor}{Corollary}
\theoremstyle{definition}
\newtheorem{defn}{Definition}
\newcommand{\N}{\mathbb{N}}
\newcommand{\R}{\mathbb{R}}
\newcommand{\Lcal}{\mathcal{L}}
\newcommand{\1}{\mathbbm{1}}
\begin{document}

\title{Structure of the condensed phase in the inclusion process}

\author[1]{Watthanan Jatuviriyapornchai}
\author[2]{Paul Chleboun}
\author[3]{Stefan Grosskinsky}
\affil[1]{\small Department of Mathematics, Faculty of Science, Mahidol University, Bangkok, 10400, Thailand}
\affil[1]{Centre of Excellence in Mathematics, Commission on higer Education, Bangkok, 10400, Thailand}
\affil[2]{Department of Statistics, University of Warwick, Coventry CV4 7AL, UK}
\affil[3]{Mathematics Institute, University of Warwick, Coventry CV4 7AL, UK}

\maketitle

\normalsize

\begin{abstract}
We establish a complete picture of condensation in the inclusion process in the thermodynamic limit with vanishing diffusion, covering all scaling regimes of the diffusion parameter and including large deviation results for the maximum occupation number. We make use of size-biased sampling to study the structure of the condensed phase, which can extend over more than one lattice site and exhibit an interesting hierarchical structure characterized by the Poisson-Dirichlet distribution. While this approach is established in other areas including population genetics or random permutations, we show that it also provides a powerful tool to analyse homogeneous condensation in stochastic particle systems with stationary product distributions. We discuss the main mechanisms beyond inclusion processes that lead to the interesting structure of the condensed phase, and the connection to other generic particle systems. Our results are exact, and we present Monte-Carlo simulation data and recursive numerics for partition functions to illustrate the main points.
\end{abstract}

\noindent
\textbf{Keywords.} condensation, inclusion process, Poisson-Dirichlet distribution, size-biased sampling

\section{Introduction}

Condensation phenomena in stochastic particle systems (SPS) continue to be a topic of major research interest. They can be caused by spatial inhomogeneities (see e.g.\ \cite{godreche2012condensation,chleboun2014condensation} and references therein) or attractive particle interaction in spatially homogeneous systems, which is the focus of this paper. If the total density of particles exceeds a critical value, the system phase separates into a homogeneous bulk and a condensed phase, with a finite fraction of the total mass concentrating in a vanishing volume fraction. 
First introduced in \cite{spitzer1970interaction}, zero-range processes and related models provided a first example of condensation in homogeneous SPS \cite{drouffe1998simple,evans2000phase,godreche2003dynamics}. 
On the level of stationary distributions condensation is characterized by heavy-tail behaviour of stationary weights as first noted in  \cite{jeon2000size,jeon2000condensation}, which has been used to study the phenomenon in the context of equivalence of ensembles and large deviations \cite{grosskinsky2003condensation, armendariz2009thermodynamic,armendariz2011conditional}.

The inclusion process has been introduced in \cite{giardina2007duality} as a discrete dual to a model of heat conduction, and has later been studied as an interesting model of stochastic transport on its own \cite{giardina2009duality,giardina2010correlation, carinci2013}. 
It is a natural bosonic counterpart to the exclusion process where particles are subject to an attractive inclusion interaction in addition to independent diffusive motion. It can also be interpreted as a multi-species version of the Moran model of population genetics \cite{moran1958random}, where the inclusion interaction corresponds to selection, and diffusion to mutation dynamics. The inclusion process is part of a larger class of models introduced in \cite{cocozza1985processus} that exhibit factorized stationary distributions, which has recently been extended \cite{fajfrova2016invariant}. Condensation in the inclusion process has first been studied in \cite{grosskinsky2011condensation} for inhomogeneous systems. Condensation in homogeneous systems only occurs if the diffusion strength vanishes with the system size. While such scaling of system parameters can lead to non-equivalence of ensembles and discontinuous behaviour as established for a toy zero-range model in \cite{grosskinsky2008discontinuous,chleboun2015dynamical}, this is not the case for the inclusion process and small diffusion or mutation rates are in fact very natural in many applications. The dynamics on various time scales have been established on a rigorous level in \cite{grosskinsky2013dynamics,bianchi2017metastability}, restricted to finite lattices in the limit of diverging particle density. In the thermodynamic limit with a finite limiting density there are only heuristic results so far, covering the dynamics of condensation in the inclusion process \cite{cao2014dynamics} and extensions with stronger particle interactions and instantaneous condensation \cite{waclaw2012explosive,chau2015explosive}. 

In particular, the stationary behaviour of the inclusion process in the thermodynamic limit has not been characterized so far, which is the main aim of this paper. We establish the equivalence of ensembles, and show that for vanishing diffusion strength the inclusion process exhibits condensation for any positive particle density. 
While the bulk of the system is empty, the condensed phase can exhibit an interesting hierarchical structure following the Poisson-Dirichlet distribution. The latter was originally introduced in the context of population genetics  \cite{kingman1975random,kingman1977population}, and has later been identified as the generic stationary distribution of split-merge dynamics \cite{pitman2002,diaconis2004poisson}, which is related to its appearance in cycle length distributions of random permutations \cite{berestycki2011emergence,betz2011spatial, grosskinsky2012lattice}. 
It has further been observed (though not identified) more recently in systems of interacting diffusions 
\cite{jack2017emergence,andres2010particle}, but to our knowledge is a novelty in the context of condensation in SPS. 
In general, the condensed phase in SPS with stationary product distributions concentrates on a single lattice site \cite{jeon2000size,jeon2000condensation, armendariz2009thermodynamic,armendariz2013zero}. A spread over multiple sites has only been observed in versions of zero-range processes which include an effective (soft) cut-off for site occupation numbers  \cite{schwarzkopf2008zero,thompson2010zero}, or in models with pair-factorized stationary states \cite{evans2006interaction,waclaw2009pair} where it occurs naturally due to spatial correlations. Poisson-Dirichlet statistics arise when the diffusion parameter in the inclusion process scales with the inverse system size, and we also establish complete condensation for smaller diffusion where all particles concentrate on a single site, and a universal exponential law for intermediate scales.

Our main results on the structure of the condensed phase are derived using size-biased sampling of occupation numbers, which is related in a natural way to the Poisson-Dirichlet distribution as reviewed in Section \ref{sec:pd}. 
While this point of view is standard in population genetics (see e.g. \cite{feng2010poisson}), this approach also provides a strong tool to study the condensed phase in SPS where it has not been used so far. After introducing the basic notation and concepts in Section \ref{sec:setting}, we derive our main results on condensation and the typical structure of the condensed phase for the inclusion process in Section \ref{sec:main}. Our results are rigorous and derivations are presented in a general, transferable way, and we show simulation data for illustration. We include results on large deviations of the condensed phase in Section \ref{sec:ld}, and conclude with a discussion of the main points and relations to other models in Section \ref{sec:dis}. In Appendix \ref{appa} we show that under a general definition of condensation the system phase separates into a homogeneous bulk and a condensed phase, and that condensation implies divergence of higher moments. In Appendix \ref{appb} we comment on Monte-Carlo dynamics to generate stationary samples, and on differences between one-dimensional and mean-field geometries.

\section{Mathematical setting\label{sec:setting}}

\subsection{Condensation in homogeneous particle systems\label{sec:condensation}}

We study stochastic particle systems (SPS) on a finite set of spatial locations/sites $\Lambda$ of size $|\Lambda |=L$, which can for example be a regular lattice with periodic or closed boundaries. The system has a fixed, finite number of $N$ particles, and we denote configurations by $\eta =(\eta_x :x\in\Lambda )$, $\eta_x \in\N_0$, and the state space $E_{L,N} =\big\{\eta :\sum_{x\in\Lambda} \eta_x =N\big\}$ denotes the set of all configurations. The dynamics should be irreducible on $E_{L,N}$, so that the process has a unique (canonical) stationary distribution $\pi_{L,N}$. We assume that $\pi_{L,N}$ is spatially homogeneous, i.e. the single-site marginals $\pi_{L,N} [\eta_x \in .]$ do not depend on site $x$, and in particular this implies that the density (the expected number of particles per site) is given as
\begin{equation}\label{themoment}
\langle \eta_x \rangle_{L,N} := \sum_{n=1}^N n\,\pi_{L,N} [\eta_x =n ] =N/L\ .
\end{equation}

We are interested in large-scale condensation phenomena of the system in the thermodynamic limit $L,N\to\infty$ such that the density converges as $N/L\to\rho \geq 0$, which in the following we often denote by $\lim_{N/L\to\rho}$ to simplify notation. We assume that in this limit finite marginals of $\pi_{L,N}$ converge, and we denote the limiting single site marginal as a distribution on $\N_0$ by
\begin{equation}\label{margi}
\nu_\rho :=\lim_{N/L\to\rho} \pi_{L,N} [\eta_x \in .]\ .
\end{equation}
This convergence of distribution functions is equivalent to weak convergence, i.e.
\begin{equation}\label{weak}
\big\langle f(\eta_x )\big\rangle_{L,N} \to \langle f\rangle_\rho \quad\mbox{as }L,N\to\infty\ ,\quad N/L\to\rho
\end{equation}
for all $x\in\Lambda$ and bounded, continuous test functions $f\in C_b (\N_0 )$. 
With \eqref{themoment} the first moment $\langle \eta_x\rangle_{L,N}\to\rho$ converges in the thermodynamic limit, and by Fatou's Lemma this implies for the first moment of the limiting distribution that
\begin{equation}\label{rhobg}
\rho_{b} :=\langle \eta_x \rangle_\rho \leq\rho\ .
\end{equation}
This is usually called the \textbf{background} or \textbf{bulk density} (indicated by the subscript) as is explained below. Strict inequality above is possible since $f(\eta_x )=\eta_x$ is an unbounded function on $\N_0$, and implies that locally the system loses mass in the limit, providing the following standard definition of condensation. \\

\begin{defn}\label{cdef}
A system with canonical distributions $\pi_{L,N}$ exhibits \textbf{condensation} in the thermodynamic limit $N/L\to\rho$ with background density $\rho_{b}$ as in \eqref{rhobg}, if $\nu_\rho$ exists as defined in \eqref{margi} and $\rho_b <\rho$. A system with $\rho_b =0$ is said to exhibit \textbf{complete condensation} if
\begin{equation}\label{complete}
\pi_{L,N} \Big[\max_{x\in\Lambda} \eta_x =N\Big] \to 1\quad\mbox{as }L,N\to\infty ,\ N/L\to\rho\ ,
\end{equation}
i.e. typically all particles in the system concentrate on a single lattice site.\\
If $\nu_\rho$ exists for all $\rho\geq 0$, the systems is said to exhibit a \textbf{condensation transition} with \textbf{critical density} $\rho_c \geq 0$, if
\begin{equation}\label{rhoc}
\rho_b \left\{\begin{array}{cl} =\rho&,\mbox{ for all }\rho <\rho_c\\ <\rho&,\mbox{ for all }\rho >\rho_c\end{array}\right.\ .
\end{equation}
\end{defn}

Condensation in the above setting has been established in various SPS, including zero-range processes and related models (see e.g. \cite{evans2005, evans2014condensation} and references therein). 
It has been shown on a case-by-case basis that $\rho_b$ is monotone increasing with $\rho$ and there exists a unique critical density $\rho_c \in [0,\infty]$ in the sense of \eqref{rhoc}. One sufficient general condition is monotonicity of the dynamics for the underlying particle system. But in principle more complicated behaviour such as non-monotonicity of $\rho_b$ cannot be ruled out, even though we are not aware of any generic examples in the thermodynamic limit. For condensation on finite lattices possible non-monotonicity of $\rho_b$ has been established and discussed e.g. in \cite{chleboun2010finite,rafferty2015monotonicity} and references therein.

As is discussed in more detail in Appendix \ref{appa}, the interpretation of $\rho_b <\rho$ is that the system phase separates into a homogeneous bulk phase and a condensed phase. The latter concentrates on a vanishing volume fraction but contains a non-zero fraction $\rho -\rho_b >0$ of the total mass in the system, and is usually simply called the condensate. Depending on the specific example and the nature of $\pi_{L,N}$ the condensate may cover only a single lattice site (see e.g. \cite{armendariz2009thermodynamic,armendariz2013zero}) or a sub-extensive volume \cite{evans2006interaction,waclaw2009pair}. In most cases the bulk density $\rho_b =\rho_c$ is equal to critical one, but there are also models with $\rho_b <\rho_c$, such as zero-range toy models with size-dependent rates \cite{grosskinsky2008discontinuous,chleboun2015dynamical} which introduce an effective long-range interaction and lead to non-equivalence of ensembles. 
Complete condensation has been established for particular zero-range processes in \cite{jeon2000size, jeon2010phase} and for inclusion processes in a fixed volume in \cite{bianchi2017metastability}. 

As we show in Appendix \ref{appa} in Proposition \ref{propcond}, condensation as defined above implies in particular divergence of higher moments $\langle \eta_x^a \rangle_{L,N}$ with $a>1$. This has been used in some papers as a definition of condensation often using $a=2$ \cite{Evans1998jamming,rajesh2001exact}. The converse does not hold, since moments of limiting distributions $\nu_\rho$ with heavy tails can diverge also in the absence of phase separation, so we stick to Definition \ref{cdef} to characterize condensation. For condensing systems, divergence of higher moments is due to the contribution of diverging occupation numbers in the condensed phase which is not described by the limiting distribution $\nu_\rho$.

\subsection{Models with stationary product measures\label{sec:spm}}

From now on we focus on stochastic particle systems which are defined by a generator of the form
\begin{equation}\label{gene}
\Lcal f(\eta )=\sum_{x,y\in\Lambda} p(x,y) u(\eta_x ,\eta_y )\big( f(\eta^{xy} )-f(\eta )\big)\ ,
\end{equation}
for continuous test functions $f\in C(E_{L,N} )$. 
This defines a continuous-time Markov process on the state space $E_{L,N}$ jumping from configurations $\eta$ to $\eta^{xy}$ where one particle moves from site $x$ to $y$. The spatial dependence of the rates is given by a multiplicative factor $p(x,y)$, which we take to be an irreducible transition kernel for a single particle on $\Lambda$. The interaction between particles is determined by the function $u$ which depends only on the occupation numbers of departure and target site of a jump event. To ensure irreducibility of the process on $E_{L,N}$ we assume
\[
u(m,n)\geq 0\quad\mbox{and}\quad u(m,n)=0\mbox{ if and only if } m=0\ .
\]
To ensure spatial homogeneity at stationarity we assume
\[
\sum_{x\in\Lambda} p(x,y)=\sum_{x\in\Lambda} p(y,x)\quad\mbox{for all }y\in\Lambda\ ,
\]
which is a slight generalization of translation invariance on regular lattices. 
This type of models have first been introduced in the seminal paper \cite{cocozza1985processus}. It is well known (see also \cite{chleboun2014condensation,fajfrova2016invariant}) that they exhibit stationary product measures if and only if
\begin{equation}\label{misa1}
\frac{u(n+1,m)}{u(m+1,n)} = \frac{u(n+1,0)}{u(1,n)}\frac{u(1,m)}{u(m+1,0)} \quad \textrm{for all }\ n,m\geq 0\,,
\end{equation}
and either $p(\cdot,\cdot)$ is symmetric, or
\begin{equation}\label{misa2}
u(n,m) - u(m,n) = u(n,0)-u(m,0) \quad \textrm{for all }\ n,m\geq 0\,.
\end{equation}
In this case, normalizing the weights $\displaystyle w(n) = \prod_{k=1}^n\frac{u(1,k)}{u(k,0)}$ leads to product distributions
\begin{equation}\label{gcm}
\nu_\phi^L [d\eta ]=\Big(\frac{1}{z(\phi )}\Big)^L \prod_{x\in\Lambda} w(\eta_x ) \phi^{\eta_x} \, d\eta\quad\mbox{with}\quad z(\phi )=\sum_{n\in\N_0 } w(n)\,\phi^n\,,
\end{equation}
which are stationary for all $\phi\geq 0$ such that the normalizing partition function $z(\phi )<\infty$. 
Note that these `grand-canonical' distributions are supported on the extended state space $E_L =\big\{ \eta :\eta_x \geq 0\big\}$ without fixing the total number of particles. The expected number of particles per site is given as a monotone increasing function of $\phi$ as
\begin{equation}\label{rhophi}
R(\phi ):=\langle \eta_x \rangle_\phi =\phi\,\partial_\phi\log z(\phi )\ .
\end{equation}
For such processes we have explicit representations of the canonical distributions as conditional grand-canonical distributions
\[
\pi_{L,N} =\nu^L_\phi \Big[\, \cdot\,\Big|\, \sum_{x\in\Lambda} \eta_x =N\Big] ,
\]
which in fact do not depend on the choice of $\phi >0$. This leads to the useful form
\begin{equation}\label{canme}
\pi_{L,N} [d\eta ]=\frac{1}{Z_{L,N}}\prod_{x\in\Lambda} w(\eta_x ) \,\delta \Big( \sum_x \eta_x ,N\Big)\, d\eta\quad\mbox{with}\quad Z_{L,N} =\sum_{\eta\in E_{L,N}} \prod_{x\in\Lambda} w(\eta_x )
\end{equation}
with canonical partition function $Z_{L,N}$. 
This implies in particular that for $\rho <\rho_c$ the limits \eqref{margi} of single-site marginals are given by the marginal $\nu_\phi^1$ with $\phi \geq 0$ such that $R(\phi )=\rho$.

For models of the above type, the condensation transition as given in Definition \ref{cdef} is equivalent to existence of $\phi_c <\infty$ such that $z(\phi )=\infty$ for all $\phi >\phi_c$, and $R(\phi )\to\rho^* <\infty$ as $\phi\to\phi_c$ (see e.g.\ \cite{chleboun2014condensation} for a detailed discussion). Examples of this type studied so far include zero-range processes with $u(m,n)=u(m)$ and decreasing rates $u(m)$ \cite{jeon2000condensation,evans2000phase,armendariz2013zero}., where $\rho^* =\rho_c =\rho_b$. If the rates can depend on the system size, the transition can also be discontinuous with $\rho_b <\rho_c <\rho^*$ where grand-canonical distributions with densities in the range $(\rho_c ,\rho^*)$ are metastable \cite{grosskinsky2008discontinuous,chleboun2015dynamical}. More recently, condensation has also been studied for inclusion processes \cite{grosskinsky2011condensation} and explosive condensation models  \cite{waclaw2012explosive,evans2014condensation,chau2015explosive} with rates of the form
\begin{equation}\label{inrate}
u(m,n)=m^\gamma \big( d +n^\gamma \big)\ ,\quad \gamma\geq 1,\ d>0\ .
\end{equation}
If $\gamma >2$ the system exhibits a condensation transition for all $d>0$ with $\rho_c >0$. 
For inclusion processes we have $\gamma =1$, and this case is covered in more detail in Section \ref{sec:equivalence}. 
In all generic systems with stationary product measures studied so far, we have
$$
\frac{1}{L}\max_{x\in\Lambda} \eta_x \to \rho -\rho_b \quad\mbox{as }L,N\to\infty\ ,\quad N/L\to\rho\ ,
$$
and the condensed phase concentrates on a single lattice site. In Section \ref{sec:main} we will see for the inclusion process that the condensed phase can extend over more than one site and have an interesting hierarchical structure, which has not been observed for condensing particle systems so far.

\subsection{Size-biased sampling\label{sec:sb}}

Since the condensed phase concentrates on a vanishing volume fraction, the limiting marginal probabilities for a fixed number $k$ of occupation numbers converge to the distribution of the bulk in a condensed system. As explained above, for models with stationary product measures this is usually given by the maximal product measure with critical density $\rho_c =R(\phi_c )$ and we have (cf.\ \cite{armendariz2009thermodynamic})
\[
\pi_{L,N} [\eta_{x_1} =n_1 ,\ldots ,\eta_{x_k} =n_k ] \to \prod_{i=1}^k \nu_{\phi_c} (n_i )\ ,
\]
for all $x_1 ,\ldots ,x_k \in\Lambda$ and $n_1 ,\ldots ,n_k \geq 0$. 
This asymptotic equivalence of  canonical and grand canonical ensembles (distributions) has been established for a large class of models \cite{grosskinsky2003condensation,chleboun2014condensation},  and implies weak convergence w.r.t.\ local, bounded test functions as in \eqref{weak}.

Since it contains a non-zero fraction of all particles, the distribution of the condensed phase can be accessed via size-biased permutations of particle configurations. This can be interpreted as picking a particle uniformly at random and sampling the occupation number $\eta_x$ at its location $x$. The larger $\eta_x$, the more likely it is to pick site $x$ in this way. Formally, this can be defined recursively (see e.g.\ \cite{feng2010poisson}, Section 2.4).

\begin{defn}\label{sb}
For given $\eta\in E_{L,N}$ pick a random permutation $\sigma :\Lambda\to\Lambda$ of the lattice indices as
\begin{align*}
\sigma (1)&=x\quad\mbox{with probability}\quad\frac{\eta_x}{N}\ ,\quad x\in\Lambda\ ;\\
\sigma (2)&=x\quad\mbox{with probability}\quad\frac{\eta_x}{N-\eta_{\sigma (1)}}\ ,\quad x\in\Lambda\setminus\{\sigma (1)\}\ ;\\
&\,\,\ldots\quad\mbox{and so on}\ .
\end{align*}
Then we call\quad $\tilde\eta =\big(\tilde\eta_1 ,\ldots ,\tilde\eta_L \big) :=\big( \eta_{\sigma (1)},\ldots ,\eta_{\sigma (L)}\big)$\quad 
a \textbf{size-biased permutation} of $\eta$.
\end{defn}

For models with canonical distributions of the form \eqref{canme}, the distribution of the first size-biased marginal is given by
\begin{equation}\label{smarg}
\pi_{L,N}[\tilde\eta_1 =n] =\frac{L}{N} n\pi_{L,N}[\eta_1 =n]= \frac{L}{N}nw(n)\frac{Z_{L-1,N-n}}{Z_{L,N}}\ ,
\end{equation}
where the stationary weight $w(n)$ is re-weighted proportional to $n$ and re-normalized. 
Here and in the following we use the convention $Z_{L,k} =0$ for all $k<0$, so we can omit indicator functions of the form $\1_{n\leq N}$ to simplify notation. Note that the first identity in \eqref{smarg} with the re-weighted marginal probability holds in general, but the second one only because $\pi_{L,N}$ is a conditional product measure of the form \eqref{canme}.  
For a two-site size-biased marginal we then have 
\begin{align*}
\pi_{L,N}\big[\tilde\eta_1=n_1, \tilde\eta_2=n_2\big]
&=\pi_{L,N}\big[\tilde\eta_2=n_2 \big| \tilde\eta_1=n_1 \big]\,\pi_{L,N}[\tilde\eta_1=n_1]\\
&=\frac{L-1}{N-n_1}\frac{Z_{L-2,N-n_1-n_2}}{Z_{L-1,N-n_1}}n_2w(n_2)\frac{L}{N}\frac{Z_{L-1,N-n_1}}{Z_{L,N}}n_1w(n_1)\\
&=\frac{L(L-1)}{N(N-n_1)}n_1n_2w(n_1)w(n_2)\frac{Z_{L-2,N-n_1-n_2}}{Z_{L,N}}\ .
\end{align*}
Generalizing to the $k$-site case we get
\begin{align}
&\pi_{L,N}\big[\tilde\eta_1=n_1, \tilde\eta_2=n_2,...,\tilde\eta_k=n_k \big]\nonumber\\
&\quad=\frac{L(L-1)\cdots (L-k+1)}{N(N-n_1)\cdots(N-\sum_{i=1}^{k-1} n_i)}\prod_{i=1}^k (n_iw(n_i))\frac{Z_{L-k,N-\sum_{i=1}^k n_i}}{Z_{L,N}},\label{kbimar}
\end{align}
which includes $k=L$ to get the full distribution of $\tilde\eta$ with $Z_{0,n}=1$ for all $n\in\{ 0,\ldots ,N\}$. Note that due to size-biased re-ordering, the distribution of $\tilde\eta$ and its marginals is of course not spatially homogeneous.

To our knowledge, essentially all previous studies of condensation in homogeneous particle systems focus instead on the (decreasing) order statistics
\begin{equation}\label{order}
\hat\eta =\big(\eta_{(1)} ,\ldots ,\eta_{(L)}\big)\quad\mbox{where}\quad \eta_{(1)} \geq \eta_{(2)}\geq \ldots \geq\eta_{(L)}\ ,
\end{equation}
and in particular the maximum occupation number $\eta_{(1)}$ (see e.g.\ \cite{ evans2008extreme,armendariz2009thermodynamic, chleboun2015dynamical,godreche2019condensation}). We will see below how this is related to size-biased sampling, and that the latter is very suitable to study condensation in systems with $\rho_b =0$ such as the inclusion process and related models. A size-biased sampling approach can also be useful in models with $\rho_b >0$ to study the dynamics of the condensed phase and phase separation as recently shown in \cite{jatuviriyapornchai2016coarsening}.

\subsection{The Poisson-Dirichlet and GEM distribution\label{sec:pdgem}}

The Poisson-Dirichlet distribution has been introduced by Kingman in the context of population genetics  \cite{kingman1975random,kingman1977population} and has since occurred in a variety of applications, such as split-merge dynamics \cite{pitman2002,diaconis2004poisson} and random permutations \cite{berestycki2011emergence,betz2011spatial, grosskinsky2012lattice}.
It is a one-parameter family of probability measures defined on the set of ordered partitions of the unit interval
\[
\nabla :=\Big\{(v_1,v_2,...)\in [0,1]^\infty : v_1\geq v_2 \geq \ldots \geq 0, \sum_{j=1}^{\infty} v_j=1\Big\}\ .
\]
It can be characterized for instance as a scaling limit of Dirichlet random variables which form a finite partition of $[0,1]$, or via scale invariant Poisson processes (see Chapter 2 in \cite{feng2010poisson} for details). 
One of the most accessible characterization in terms of practical use is related to the GEM distribution, named in \cite{ewens2004mathematical} after Griffiths \cite{griffiths1980lines,griffiths1988distribution}, Engen \cite{engen2013stochastic} and McCloskey \cite{mccloskey1965model}, which is defined as follows. 
Let $U_1, U_2, \ldots$ be i.i.d. Beta($1,\alpha$) random variables with $\alpha >0$, which take values on $[0,1]$ with PDF $\alpha (1-x)^{\alpha -1}$, and the uniform distribution as a special case for $\alpha =1$. On the set of (unordered) partitions
\[
\Delta :=\Big\{(v_1,v_2,...)\in [0,1]^\infty : \sum_{j=1}^{\infty} v_j=1\Big\}\ .
\]
define a random element $V:=(V_1,V_2,...)\in \Delta$ recursively via
\begin{equation}\label{gem1}
V_1=U_1, V_2=(1-U_1)U_2, V_3=(1-U_1)(1-U_2)U_3, \ldots\ ,
\end{equation}
which corresponds intuitively to breaking off a fraction $1-U_1$ from the unit interval and continuing this process recursively with the remaining interval. The law of $V$ on $\Delta$ is called the \textbf{Griffiths-Engen-McCloskey distribution GEM($\boldsymbol{\alpha}$)}, and the corresponding order statistics $\hat V$ on $\nabla$ has \textbf{Poisson-Dirichlet distribution PD($\boldsymbol{\alpha}$)}. Alternatively, given a PD($\alpha$) distributed partition $V$ on $\nabla$, its size-biased permutation $\tilde V$ has GEM($\alpha$) distribution on $\Delta$ (see e.g. \cite{feng2010poisson} for details).

Note that the construction \eqref{gem1} leads to a hierarchical structure of a GEM($\alpha$) partition $V$, and the paramter $\alpha >0$ controls the expected size of the components. The expectation of Beta$(1,\alpha )$-distributed random variables $U_i$ is $\frac{1}{1+\alpha}$, so for small $\alpha$ the size of the first component $V_1$ is larger and the hierarchy stronger. For larger $\alpha$ the expected sizes of the components are more similar, but always show a strict order since
\begin{equation}\label{gem2}
\Big\langle 1-\sum_{k=1}^n V_k\Big\rangle_{\mathrm{GEM}(\alpha )}=\Big\langle \prod_{k=1}^n(1-U_k)\Big\rangle_{\mathrm{GEM}(\alpha )} =\left(\frac{\alpha}{1+\alpha}\right)^n \to 0\quad\mbox{as }n\to\infty\ .
\end{equation}
This shows that in fact $V\in\Delta$ and that the expected component sizes of $V_k$ vanish as $k\to\infty$, and is also a useful relation to numerically test for GEM distributions (see Section \ref{sec:simu}).

Carrying over the product topology from $[0,1]^\infty$, 
weak convergence of probability distributions on $\Delta$ and $\nabla$ is equivalent to convergence in distribution of finite marginals $(V_1 ,\ldots ,V_k)$ of partitions. 
By Theorem 2 in \cite{donnelly1989continuity}, convergence in distribution of a sequence of size biased partitions $\tilde V^i \to V$ on $\Delta$, implies convergence in distribution of the corresponding ordered partitions $\hat V^i \to \hat V$, and $V$ is a size-biased permutation of $\hat V$. 
In Section \ref{sec:pd} we will use this fact and that rescaled particle configurations $\frac{1}{N}\eta \in\Delta$ can be interpreted as finite partitions of the unit interval, to derive our main results. Note that in a condensing system with $\rho_b <\rho$ \eqref{rhobg} the partitions $\frac{1}{N}\eta$ in the thermodynamic limit only converge on the extended space 
\[
\overline{\Delta} :=\Big\{(v_1,v_2,...)\in [0,1]^\infty : \sum_{j=1}^{\infty} v_j \leq 1\Big\}\ ,
\]
which allows for the loss of mass due to phase separation (see Proposition \ref{propcond} in Appendix \ref{appa}). On the other hand, size-biased permutations capture the condensed phase and the full mass of the system, and $\frac{1}{N}\tilde\eta$ converge on $\Delta$, as we will establish in the next Section.

\section{Condensation in the inclusion process\label{sec:main}}

The inclusion process is a stochastic particle system of type \eqref{gene} with rates
\begin{equation}\label{iprates}
u(m,n)=m(d+n)\quad\mbox{with parameter }d>0\ ,
\end{equation}
which was first introduced in \cite{giardina2007duality} in the context of energy/mass transport. Another important interpretation of this model is as a multi-species version of the Moran model of population genetics, which describes the selection-mutation dynamics of a population of $N$ individuals which can take $L$ different types \cite{mim2017thesis}. Here the parameter $d$ describes the mutation rate, which is small compared to the reproduction rate of the system and is often taken to depend on the system size $d=d_L >0$ and vanish as $L\to\infty$. 
Results in \cite{bianchi2017metastability} show that for fixed $L$ as $N\to\infty$, complete condensation occurs if $d=d_N \ll 1/\log N$. The thermodynamic limit has not been studied so far, and in this section we will establish a complete picture covering all densities $\rho >0$ and possible scaling regimes of the parameter $d$.

The inclusion process satisfies conditions \eqref{misa1} and \eqref{misa2} and has stationary product measures of the form \eqref{gcm} with weights
\begin{equation}\label{weights}
w(n)=\frac{\Gamma (n+d)}{n!\Gamma (d)}\simeq d\, n^{d-1}\quad\mbox{as }n\to\infty\ ,
\end{equation}
\footnote{for functions or sequences we write $f(n)\simeq g(n)$ if $f(n)/g(n)\to 1$ as $n\to\infty$} 
and with normalization $z(\phi )=(1-\phi )^{-d}$. 
So $\phi_c =1$ and
\begin{equation}\label{rphip}
R(\phi )=d\frac{\phi}{1-\phi} \to\infty\quad\mbox{as }\phi\to 1\quad\mbox{for all }d>0\ .
\end{equation}
This also leads to an explicit formula for the canonical distributions
\begin{equation}\label{ipcan}
\pi_{L,N} [d\eta ]=\frac{1}{Z_{L,N}}\prod_{x\in\Lambda} \frac{\Gamma (\eta_x +d)}{\eta_x !\Gamma (d)}\, d\eta\quad\mbox{with}\quad Z_{L,N}=\frac{\Gamma (N+dL)}{N! \Gamma (dL)}\,,
\end{equation}
which can be identified as a Dirichlet multinomial distribution (cf.\ \cite{feng2010poisson}, Chapter 1). These have been studied in detail in the context of urn models and have interesting structural properties and symmetries, but in the following we only make use of the asymptotic form of the partition function so that our results can be more easily translated to other systems. 
Our main results in the thermodynamic limit $N,L\to\infty$, $N/L\to\rho\geq 0$ are derived in the next subsections, and can be summarized as follows:
\begin{enumerate}
\item $d>0$ constant or $d_L \to d>0$: we have asymptotic equivalence of canonical measures and stationary product distributions \eqref{gcm} with $\phi \in [0,1)$ such that $R(\phi )=\rho$ \eqref{rhophi}, and there is no condensation.
\item $d\to 0$: the inclusion process exhibits a condensation transition with $\rho_c =0$ as follows:
\begin{enumerate}
\item $d\to 0$ and $d L\log L\to 0$: complete condensation
\item $d\to 0$ and $d L\to\alpha\in (0,\infty )$: the condensed phase exhibits a hierarchical structure on the scale $N$ given by the PD($\alpha$) distribution.
\item $d\to 0$ and $d L\to\infty$: the condensed phase consists of order $dL$ sites with independent occupation numbers of order $\rho /d$ and exponential distribution.
\end{enumerate}
\end{enumerate}
We will make use of the asymptotic behaviour of $w(n)$ \eqref{weights} and the partition function $Z_{L,N}$, which can be derived by standard Stirling approximations from \eqref{ipcan}.  Particularly useful in the following is the asymptotic behaviour of the ratio
\begin{equation}\label{ratio}
\frac{\Gamma (L+a)}{\Gamma (L+b)} = L^{a-b} \big( 1+o(1)\big)\quad\mbox{as }L\to\infty\ ,
\end{equation}
\footnote{we write $f(n)=o\big( g(n)\big)$ if $f(n)/g(n)\to 0$ as $n\to\infty$} which holds for all sequences $a=a_L$ and $b=b_L$ such that $a^2 ,b^2 \ll L$. Recall also that $\Gamma (d)= \frac{1}{d}\big(1 +o(1)\big)$ as $d\to 0$.

\subsection{Equivalence of ensembles and condensation\label{sec:equivalence}}

We assume $d>0$ constant or $d_L \to d>0$. In this case \eqref{rphip} implies that there exist grand-canonical distributions for any density $\rho\geq 0$, by choosing
\begin{equation}\label{phirho}
\phi =\Phi (\rho ):=\frac{\rho}{d+\rho}\in [0,1)
\end{equation}
such that $R(\phi )=\rho$. In this case the equivalence of ensembles can be established most naturally in terms of the specific relative entropy between canonical and grand-canonical distributions (see e.g.\ \cite{grosskinsky2003condensation,chleboun2014condensation})
\begin{align}
\frac{1}{L}H(\pi_{L,N} ,\nu_\phi^L )&=\frac{1}{L}\sum_{\eta\in E_{L,N}} \pi_{L,N} [\eta ]\log\frac{\pi_{L,N} [\eta ]}{\nu_\phi^L [\eta ]}\nonumber\\
&=\log z(\phi )-\frac{N}{L}\log\phi -\frac{1}{L}\log Z_{L,N}\ .\label{relen}
\end{align}
Computing the leading order terms of $Z_{L,N}$ from \eqref{ipcan} with standard Stirling formula we get
$$
\frac{1}{L}\log Z_{L,N} \to \rho\log\Big( 1+\frac{d}{\rho}\Big) +d\log\rho\ ,
$$
so choosing $\phi =\phi (\rho )$ as in \eqref{phirho} we see that \eqref{relen} vanishes in the thermodynamic limit since $\log z(\phi )=-d\log (1-\phi )$. 
Convergence in specific relative entropy implies convergence of finite marginals \cite{chleboun2014condensation}, i.e. for any fixed $k>0$ and $n_1 ,\ldots ,n_k \geq 0$
\begin{align*}
\pi_{L,N} \big[ \eta_1 =n_1 ,\ldots ,\eta_k =n_k\big]\to \frac{1}{z(\phi )^k}\prod_{i=1}^k w( n_i ) \phi (\rho )^{n_i}\quad\mbox{as }N/L\to\rho\ .
\end{align*}
The latter limit could also be computed directly in analogy to other results below, but the route via the equivalence of ensembles is more robust since only the logarithm of the partition function has to be controlled to leading order.

An alternative representation of the specific relative entropy is given by (see e.g. \cite{grosskinsky2003condensation})
\begin{align*}
\frac{1}{L}H(\pi_{L,N} ,\nu_\phi^L )&=-\frac{1}{L}\log \nu_{\phi}^L\Big[\sum_{x \in \Lambda}\eta_x = N\Big]\, .
\end{align*}
Since the second moment of the single-site marginal $\nu_\phi$ is finite when $\phi (\rho )=\rho /(\rho +d)<1$, one can show that this vanishes in the thermodynamic limit even without computing the asymptotics of $Z_{L,N}$, by applying a local central limit theorem to the right hand side 
(see for example \cite{chleboun2011large,davis1995elementary}).

In the case $d\to 0$, \eqref{rphip} implies that there are no grand-canonical distributions for any positive density and therefore we expect a condensation transition, following the discussion after \eqref{canme}. We summarize this in the following result proved by a direct computation.

\begin{prop}\label{conprop}
Provided that $d\to 0$ as $L\to\infty$, the inclusion process exhibits a condensation transition as given in Definition \ref{cdef} with $\rho_c =\rho_b =0$, i.e. we have for all fixed $n\geq 0$ and $\rho\geq 0$
$$
\pi_{L,N} [\eta_1 =n] \to\delta_{0,n}\quad\mbox{as }L,N\to\infty ,\ N/L\to\rho >0\ .
$$
\end{prop}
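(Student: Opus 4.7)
The plan is to compute the single-site marginal $\pi_{L,N}[\eta_1=n]$ exactly from the product form \eqref{ipcan}, expand each Gamma factor with the asymptotic \eqref{ratio} together with $\Gamma(d)\sim 1/d$ as $d\to 0$, and show the marginal vanishes for every fixed $n\geq 1$. The statement at $n=0$ will then follow from the mean identity $\langle\eta_1\rangle_{L,N}=N/L\to\rho$ via a simple truncation argument.

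For fixed $n\geq 1$, summing out the occupation numbers at sites $2,\dots,L$ in \eqref{ipcan} gives
\begin{equation*}
\pi_{L,N}[\eta_1=n]=\frac{\Gamma(n+d)}{n!\,\Gamma(d)}\cdot\frac{N!}{(N-n)!}\cdot\frac{\Gamma(N+dL-n-d)}{\Gamma(N+dL)}\cdot\frac{\Gamma(dL)}{\Gamma(dL-d)}.
\end{equation*}
The first factor is the rising factorial $\frac{1}{n!}d(d+1)\cdots(d+n-1)\sim d(n-1)!/n!=d/n$ as $d\to 0$. The second factor is $\sim N^n$ and, by \eqref{ratio} with base variable $N+dL$, the third factor is $\sim(N+dL)^{-n-d}$; since $dL/N\to 0$ these three combine to $(N+dL)^{-d}(1+o(1))$. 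The remaining factor $\Gamma(dL)/\Gamma(dL-d)$ must be treated according to the scale of $dL$: if $dL\to 0$ then $\Gamma(x)\sim 1/x$ near the origin gives ratio $(L-1)/L\to 1$; if $dL\to\alpha\in(0,\infty)$ both Gammas converge to $\Gamma(\alpha)$; if $dL\to\infty$, \eqref{ratio} gives ratio $\sim(dL)^d$. In every regime the combination with the surviving $(N+dL)^{-d}$ can be written as
\begin{equation*}
(N+dL)^{-d}\,\frac{\Gamma(dL)}{\Gamma(dL-d)}=\Big(\frac{dL}{N+dL}\Big)^{\!d}(1+o(1))\to 1,
\end{equation*}
since $d\log\!\big(dL/(N+dL)\big)=d\log d-d\log(\rho+d+o(1))\to 0$ using $d\log d\to 0$. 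Therefore $\pi_{L,N}[\eta_1=n]\sim d/n\to 0$ for every fixed $n\geq 1$.

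For $n=0$, combine this pointwise vanishing with the mean identity $\sum_{n\geq 1}n\,\pi_{L,N}[\eta_1=n]=N/L\to\rho$: for any $M\geq 1$
\begin{equation*}
\sum_{n\geq 1}\pi_{L,N}[\eta_1=n]\leq\sum_{n=1}^{M}\pi_{L,N}[\eta_1=n]+\frac{1}{M+1}\sum_{n>M}n\,\pi_{L,N}[\eta_1=n]\leq o(1)+\frac{\rho+o(1)}{M+1},
\end{equation*}
so sending $L,N\to\infty$ first and then $M\to\infty$ yields $\pi_{L,N}[\eta_1=0]\to 1$. This identifies $\nu_\rho=\delta_0$, giving $\rho_b=0<\rho$ for every $\rho>0$ and hence $\rho_c=\rho_b=0$ in the sense of Definition \ref{cdef}. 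The main obstacle is the non-uniform behaviour of $\Gamma(dL)/\Gamma(dL-d)$ across the three scaling regimes of $dL$, since \eqref{ratio} only applies when its base variable diverges and small-$dL$ behaviour must be handled via the pole of $\Gamma$ at the origin; the key observation that unifies the cases is that, after combining with the factor $(N+dL)^{-d}$, the product assumes the compact form $(dL/(N+dL))^d$, whose limit is $1$ in all regimes as an immediate consequence of $d\log d\to 0$.
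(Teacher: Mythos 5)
Your proof is correct and takes essentially the same route as the paper: both write $\pi_{L,N}[\eta_1=n]=w(n)\,Z_{L-1,N-n}/Z_{L,N}$, show the partition-function ratio tends to $1$ across all scaling regimes of $dL$, and conclude from $w(n)\simeq d/n\to 0$ for fixed $n\geq 1$. The paper simply cites its pre-derived asymptotics \eqref{zscaling} and \eqref{zratio} for the ratio and reads off the $n=0$ case from $w(0)=1$, whereas you expand the Gamma factors from scratch (with the tidy unification $(dL/(N+dL))^d\to 1$ via $d\log d\to 0$) and close $n=0$ by a truncation argument using \eqref{themoment} — minor presentational variants of the same computation.
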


\begin{proof}
We have for any $n\geq 0$ fixed
\begin{equation}
\pi_{L,N} \big[ \eta_1 =n \big] = w(n)\frac{Z_{L-1,N-n}}{Z_{L,N}}\simeq w(n)\ ,
\end{equation}
since with the scaling \eqref{zscaling} given below for the partition function in the case $dL\to\alpha\in [0,\infty )$ we have
$$
\frac{Z_{L-1,N-n}}{Z_{L,N}} \simeq \Big( 1-\frac{n}{N}\Big)^{dL} \to 1\ .
$$
The same holds with \eqref{zratio} in the case $dL\to\infty$. 
From \eqref{weights} we have $w(0)=1$ and $w(n)=O(d)$ for any $n\geq 1$, leading to $\pi_{L,N} [\eta_1 =n] \to\delta_{0,n}$, independently of $\rho$. 
With Definition \ref{cdef} this implies condensation with $\rho_c =\rho_b =0$.
\end{proof}

So locally the system appears empty in the limit, and a further investigation of the condensed phase will be given below in terms of size-biased samples. Note that in the proof we only use the asymptotic behaviour of ratios of partition functions and the fact that $w(n)=O(d)$ for all $n>0$.

\subsection{GEM scaling limit and complete condensation\label{sec:pd}}

We study the distribution of the condensed phase by computing size-biased marginals in the case $dL\to \alpha \geq 0$. Using \eqref{ratio}, the leading order behaviour of the partition function is given by
\begin{equation}\label{zscaling}
Z_{L,N} =\frac{\Gamma (N+dL)}{N!\Gamma (dL)}\simeq \left\{\begin{array}{cl}
 d N^{dL} /\rho &\mbox{ if }dL\to 0\,,\\ N^{dL -1}/\Gamma (\alpha )&\mbox{ if }dL\to\alpha >0\,.
\end{array}\right.
\end{equation}
Recall from Section \ref{sec:pdgem} that $\frac{1}{N} (\eta_1 ,\ldots ,\eta_L )$ is a (finite) partition of the unit interval.

\begin{thm}\label{thm1}
In the thermodynamic limit $L,N\to \infty$ such that $N/L \to \rho$ with $dL\to\alpha>0$, the rescaled order statistics of $\eta$ \eqref{order} converge in distribution to Poisson Dirichlet, i.e.
\begin{equation}\label{thm1e}
\frac{1}{N}\hat\eta =\frac{1}{N} \big( \eta_{(1)} ,\ldots ,\eta_{(L)}\big)\stackrel{D}{\longrightarrow} \mathrm{PD}(\alpha )\ .
\end{equation}
Equivalently, size-biased samples converge as\quad $\frac{1}{N}\tilde\eta \stackrel{D}{\longrightarrow}\mathrm{GEM}(\alpha )$\ .
\end{thm}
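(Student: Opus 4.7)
The plan is to prove the GEM scaling limit for size-biased permutations and then invoke the Donnelly--Joyce correspondence cited at the end of Section~\ref{sec:pdgem} to transfer the convergence to the ordered partitions. Since both GEM($\alpha$) and the law of $\tilde\eta/N$ (padded with zeros to an element of $[0,1]^\infty$) are determined in the product topology by their finite-dimensional prefixes $(V_1,\dots, V_k)$, it suffices to show that for every fixed $k \geq 1$ the joint distribution of $(\tilde\eta_1/N, \dots, \tilde\eta_k/N)$ converges to the law of the first $k$ stick-breaking pieces \eqref{gem1}.

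The core of the argument is a direct asymptotic computation starting from the exact product formula \eqref{kbimar}. Substituting $n_i = N v_i$ for fixed $v_i > 0$ with $\sum v_i < 1$, I would combine (i) $L(L-1)\cdots(L-k+1) \sim L^k$ and $N(N-n_1)\cdots (N-\sum_{i<k} n_i) = N^k\prod_{j=0}^{k-1}(1-\sum_{i\leq j}v_i)$; (ii) the asymptotic $n_i w(n_i) \simeq d\, n_i^d$ from \eqref{weights}; and (iii) the ratio
\begin{equation*}
\frac{Z_{L-k,\, N-\sum n_i}}{Z_{L,N}} \simeq N^{-dk}\Big(1-\sum_i v_i\Big)^{\alpha-1},
\end{equation*}
obtained by applying \eqref{zscaling} to numerator and denominator, using $d(L-k)\to\alpha$ together with \eqref{ratio} to control the $\Gamma$-ratios. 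Collecting the powers of $N$ and $L$ and using $dL\to\alpha$, $N^{d}\to 1$, and $v_i^d \to 1$ pointwise yields
\begin{equation*}
N^k\, \pi_{L,N}\big[\tilde\eta_1 = Nv_1, \dots, \tilde\eta_k = Nv_k\big] \;\longrightarrow\; \frac{\alpha^k \big(1 - \sum_{i=1}^k v_i\big)^{\alpha-1}}{\prod_{j=1}^{k-1}\big(1 - \sum_{i=1}^j v_i\big)}\, .
\end{equation*}
I would then verify that this limit coincides with the density of $(V_1,\dots, V_k)$ under the stick-breaking \eqref{gem1}: the triangular Jacobian of $(U_1,\dots, U_k)\mapsto (V_1,\dots, V_k)$ produces exactly the denominator $\prod_{j=1}^{k-1}(1-\sum_{i\leq j}v_i)$, while the product of Beta$(1,\alpha)$ densities telescopes to $\alpha^k(1-\sum v_i)^{\alpha-1}$.

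Pointwise convergence of the rescaled mass function $N^k\pi_{L,N}[\,\cdot\,]$ to an integrable limit density on the simplex upgrades to weak convergence of the $k$-dimensional marginals by Scheff\'e's lemma, provided no mass escapes near the boundary. The main technical obstacle is controlling two such regions. First, configurations in which some $n_i$ is of order $1$, where $w(n)\simeq d n^{d-1}$ is inaccurate; here however $n w(n) = O(d)$ uniformly for $n\geq 1$, so summing \eqref{kbimar} over any bounded window $\{n_i\leq M\}$ gives a contribution of order $(dL/N)^k = O(L^{-k})\to 0$ in view of $N/L\to\rho$. Second, configurations with $\sum v_i$ close to $1$; since the candidate limit is an integrable density on the simplex and the total mass of the prelimit equals $1$, Scheff\'e's lemma (or equivalently a Fatou argument combined with the fact that both densities integrate to $1$) rules out loss of mass there. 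Once $\tilde\eta/N \stackrel{D}{\longrightarrow} \mathrm{GEM}(\alpha)$ is established, Theorem~2 of \cite{donnelly1989continuity} immediately yields $\hat\eta/N\stackrel{D}{\longrightarrow}\mathrm{PD}(\alpha)$, completing the proof.
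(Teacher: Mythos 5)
Your proposal follows essentially the same route as the paper's proof: compute the $k$-site size-biased marginal \eqref{kbimar}, substitute the asymptotics of the weights \eqref{weights} and the partition function \eqref{zscaling}, deduce the local convergence to the GEM finite-dimensional density, and transfer to PD via Donnelly--Joyce. The only differences are cosmetic and expository: you work with the marginal variables $v_i = n_i/N$ and recover the GEM density by a Jacobian computation, whereas the paper parametrizes directly in the stick-breaking fractions $x_i$ (with $n_1/N\to x_1$, $n_2/N\to(1-x_1)x_2$, etc.) and multiplies through by $N(N-n_1)\cdots(N-\sum_{i<k}n_i)$ so the limit factorizes immediately as $\alpha^k\prod(1-x_i)^{\alpha-1}$; the two formulas are the same after the change of variables. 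You also spell out the local-to-weak upgrade (via a Scheff\'e/local-limit argument and the boundary-mass check), which the paper leaves implicit; one small caveat is that Scheff\'e's lemma in its standard form is for densities with respect to a common reference measure, so what you actually need here is the usual local-CLT-to-weak-convergence step for lattice marginals, but the underlying idea is correct.
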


\begin{proof}
Following the discussion in Section \ref{sec:pdgem} it suffices to show that for all $k\geq 1$, $x_1 ,\ldots ,x_k \in [0,1]$ we have
\begin{equation}\label{toshow}
N(N-n_1)\cdots\Big( N-\sum_{i=1}^{k-1} n_i\Big)\pi_{L,N}[\tilde\eta_1=n_1, \tilde\eta_2=n_2,...,\tilde\eta_k=n_k]\to \alpha^k\prod_{i=1}^k (1-x_i)^{\alpha-1},
\end{equation}
provided that $\frac{n_1}{N}\to x_1\in[0,1] , \frac{n_2}{N}\to (1-x_1)x_2, \cdots,  \frac{n_k}{N}\to (1-x_1)(1-x_2)\cdots(1-x_{k-1})x_k$. With the characterization in \eqref{gem1} this establishes convergence in distribution of size-biased permutations to GEM($\alpha$), which is equivalent to \eqref{thm1e}. 

Using \eqref{kbimar}, the scaling of $w(n)\simeq dn^{d-1}$  as $n\to\infty$ \eqref{weights} and the partition function \eqref{zscaling}, and \eqref{ratio} we get
\begin{align}\label{pdcomp}
&\pi_{L,N}[\tilde\eta_1=n_1, \tilde\eta_2=n_2,...,\tilde\eta_k=n_k]\nonumber\\
&\quad=\frac{L(L-1)\cdots (L-k+1)}{N(N-n_1)\cdots(N-\sum_{i=1}^{k-1} n_i)}\frac{Z_{L-k,N-\sum_{i=1}^{k} n_i}}{Z_{L,N}}\prod_{i=1}^k (n_iw(n_i))\nonumber\\
&\quad\simeq\frac{L(L-1)\cdots (L-k+1)d^k}{N(N-n_1)\cdots(N-\sum_{i=1}^{k-1} n_i)} \Big(\frac{N-\sum_{i=1}^{k} n_i}{N}\Big)^{dL -1} \Big(N-\sum_{i=1}^k n_i\Big)^{-dk} \prod_{i=1}^k n_i^d\ .
\end{align}
Since $d=O(1/L)$ we have $n_i^d \to 1$ and also $\big(N-\sum_{i=1}^k n_i\big)^{-dk} \to 1$. Furthermore, with the choice of $n_i$ we have
$$
1-\frac{1}{N}\sum_{i=1}^k n_i \simeq (1-x_1 )\cdots (1-x_k )
$$
which implies \eqref{toshow}.
\end{proof}
For $\alpha \to 0$ the above limiting distribution PD($\alpha$) degenerates, with the mass fraction of the maximal occupation number tending to $1$. Under a mild additional assumption $dL\ll 1/\log L$ on the scaling, this statement can be significantly strengthened to ensure complete condensation in analogy with results in \cite{bianchi2017metastability} for fixed $L$ as $N\to\infty$.

\begin{prop}\label{propc}
In the thermodynamic limit $L,N\to \infty$ such that $N/L \to \rho$ with $dL\log L\to 0$, we have complete condensation in the sense of \eqref{complete}, i.e. $\pi_{L,N} \big[\max_{x\in\Lambda} \eta_x=N\big]\to 1$.
\end{prop}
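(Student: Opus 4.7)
The plan is to identify the event $\{\max_x \eta_x = N\}$ exactly and read off its probability from \eqref{ipcan}, then plug in the sharp asymptotics already developed earlier in this section. The particle conservation $\sum_x \eta_x = N$ means that saturation of the maximum is equivalent to all particles sitting on a single site, so
\begin{equation*}
\Big\{\max_{x\in\Lambda}\eta_x = N\Big\} = \bigsqcup_{x \in \Lambda}\bigl\{\eta_x = N,\ \eta_y = 0\ \forall y\neq x\bigr\}.
\end{equation*}
Because $w(0) = 1$, \eqref{ipcan} gives the one-line exact identity
\begin{equation*}
\pi_{L,N}\Big[\max_{x\in\Lambda} \eta_x = N\Big] = L\,\frac{w(N)}{Z_{L,N}},
\end{equation*}
and the proposition reduces to showing that this ratio tends to $1$.

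I would then extract the asymptotics of $w(N)$ and $Z_{L,N}$ separately. Writing $w(N) = \Gamma(N+d)/[\Gamma(N+1)\Gamma(d)]$ and applying the ratio formula \eqref{ratio} with $a=d$, $b=1$ together with $\Gamma(d) = d^{-1}(1+o(1))$ gives $w(N) \simeq d N^{d-1}$. The hypothesis $dL\log L \to 0$ in particular forces $dL \to 0$, so the first branch of \eqref{zscaling} applies and $Z_{L,N} \simeq d N^{dL}/\rho$. Inserting both,
\begin{equation*}
L\,\frac{w(N)}{Z_{L,N}} \simeq \frac{L\rho}{N}\,N^{-d(L-1)}.
\end{equation*}
The prefactor tends to $1$ by $N/L \to \rho$; and since $\log N = \log L + O(1)$, one obtains $d(L-1)\log N = dL\log L + O(dL) \to 0$, so $N^{-d(L-1)} \to 1$.

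I do not see a genuine obstacle here, since all the ingredients are already assembled in this subsection. The point worth emphasising is the sharp match between hypothesis and conclusion: the scaling $dL \to 0$ alone is not enough, because it does not control the logarithmic correction $N^{-d(L-1)}$. The stronger condition $dL\log L \to 0$ is exactly what kills this correction and upgrades the degenerate $\alpha \to 0$ limit of Theorem \ref{thm1} to the much stronger probabilistic statement $\pi_{L,N}[\max_x \eta_x = N] \to 1$; under the borderline regime $dL \to \alpha > 0$ this factor has a nontrivial limit, consistent with PD$(\alpha)$ having maximal atom strictly less than $1$ almost surely.
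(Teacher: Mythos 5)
Your proposal is correct and essentially coincides with the paper's argument: the paper works through the first size-biased marginal and shows $\pi_{L,N}[\tilde\eta_1 = N-n] \to \delta_{n,0}$, whose $n=0$ case reduces (since $Z_{L-1,0}=1$) to precisely your exact identity $\pi_{L,N}[\max_x\eta_x=N]=L\,w(N)/Z_{L,N}$, followed by the same Stirling asymptotics. Your version is a touch more direct since it skips the $n\geq 1$ cases, which the paper includes but does not need for the conclusion.
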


\begin{proof}
It suffices to show for the first size-biased marginal that
\begin{equation}\label{toshow2}
\pi_{L,N} [\tilde\eta_1 =N-n] \to \delta_{n,0}\quad\mbox{for all }n\geq 0\ ,
\end{equation}
which implies the same for the maximal occupation number. Using again \eqref{smarg}, \eqref{weights} and \eqref{zscaling} we have for all $n\geq 0$ 
\begin{align*}
\pi_{L,N} [\tilde\eta_1 =N-n]
&= \frac{L}{N}(N-n)w(N-n)\frac{Z_{L-1,n}}{Z_{L,N}} \simeq \frac{d}{\rho }(N-n)^{d} \frac{Z_{L-1,n}}{d N^{dL}/\rho}\\
&= \Big( 1-\frac{n}{N}\Big)^{d} N^{-d(L-1)} Z_{L-1,n}\ .
\end{align*}
The first term tends to $1$ for all $n\geq 0$ and the second scales like
$$
N^{-d(L-1)} =e^{-d(L-1)\log N}\to 1\quad\mbox{since }dL\ll 1/\log L\ .
$$
Then $Z_{L-1 ,0}=1$ and $Z_{L-1 ,n}\simeq dL /n\to 0$ for $n\geq 1$, which implies \eqref{toshow2}.
\end{proof}

\subsection{Intermediate scales\label{sec:inter}}

Assuming that $d\to 0$ with $dL\to\infty$ we cannot easily apply \eqref{ratio} for asymptotic estimates, and after a slightly more involved Stirling approximation the leading order of the partition function \eqref{canme} is
\begin{equation}\label{ziscale}
Z_{L,N}\simeq \frac{e^{-1}}{\sqrt{2\pi dL}}\Big(\frac{N}{dL}\Big)^{dL-1} \Big( 1+\frac{dL}{N}\Big)^{N+dL}\ .
\end{equation}
While in principle this scaling together with that of the weights \eqref{weights} fully determines the asymptotics of size-biased distributions, it turns out to be more useful to use particular cancellations when estimating ratios of partition functions to proof our main result below. The above scaling implies for all fixed $n\geq 0$ that
\begin{equation}\label{zratio}
\frac{Z_{L-1,N-n}}{Z_{L,N}}\simeq \big( 1-n/N\big)^{dL} \big( 1+1/L\big)^{dL} \big( 1+dL/N\big)^{-n} \to 1\ ,
\end{equation}
which we have used to prove Proposition \ref{conprop}.

\begin{thm}\label{interthm}
In the thermodynamic limit $L,N\to \infty$ such that $N/L \to \rho$, $d\to 0$ and $dL\to\infty$, we have for any $\rho> 0$ and fixed $k\in\N$
$$
d(\tilde\eta_1 ,\ldots ,\tilde\eta_k )\stackrel{D}{\longrightarrow}\mbox{i.i.d. Exp}(1/\rho )\ .
$$
i.e. marginals of rescaled size-biased samples $\tilde\eta$ converge in distribution to independent exponential random variables with mean $\rho$.
\end{thm}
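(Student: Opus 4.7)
The plan is to compute the asymptotics of the joint size-biased probability \eqref{kbimar} at occupation numbers $n_i$ satisfying $dn_i\to y_i$ for arbitrary fixed $y_i>0$, and to show that $\frac{1}{d^k}\pi_{L,N}[\tilde\eta_1=n_1,\ldots,\tilde\eta_k=n_k]\to\prod_{i=1}^k\tfrac{1}{\rho}e^{-y_i/\rho}$. This is the joint density of $k$ i.i.d.\ Exp$(1/\rho)$ random variables at the lattice points $y_i=dn_i$, and a standard lattice local-limit-theorem argument (pointwise convergence of rescaled mass functions to a continuous density, combined with the fact that both sides integrate to $1$) then yields the claimed convergence in distribution.

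Three factors in \eqref{kbimar} need to be controlled. The combinatorial prefactor $L(L-1)\cdots(L-k+1)/[N(N-n_1)\cdots(N-\sum_{i=1}^{k-1}n_i)]\to 1/\rho^k$ is immediate, since $n_i/N=O(1/(dL))\to 0$ under the assumption $dL\to\infty$. For the weights, \eqref{weights} gives $n_iw(n_i)\simeq dn_i^d=d(y_i/d)^d\simeq d$, using that $d\log(1/d)\to 0$ as $d\to 0$, so $\prod_{i=1}^k n_iw(n_i)\simeq d^k$.

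The crucial step is the ratio of partition functions, which I would decompose as
\begin{equation*}
\frac{Z_{L-k,N-m}}{Z_{L,N}}=\frac{\Gamma(N+dL-m-dk)}{\Gamma(N+dL)}\cdot\frac{N!}{(N-m)!}\cdot\frac{\Gamma(dL)}{\Gamma(dL-dk)},
\end{equation*}
with $m=\sum_i n_i=O(1/d)$, and apply the Taylor/Stirling expansion $\log\Gamma(x-h)/\Gamma(x)=-h\psi(x)+h^2\psi'(x)/2-h^3\psi''(x)/6+\ldots$ separately to each Gamma ratio. The leading logarithmic contributions combine, using $\psi(x)\sim\log x$ and $\log(N+dL)=\log N+dL/N+O((dL/N)^2)$, into $dk\log(dL/N)-(m+dk)\,dL/N$; the first summand vanishes because $d\log d\to 0$ and $dL/N\to 0$, while $m\,dL/N=(\sum_i y_i)L/N\to\sum_i y_i/\rho$ supplies the exponential decay. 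The potentially large quadratic and higher-order Stirling corrections $m^n/(nN^{n-1})$ from $\log[N!/(N-m)!]$ and $\log[\Gamma(N+dL-m-dk)/\Gamma(N+dL)]$ appear with opposite signs and cancel identically; the residual cross-terms from expanding $(m+dk)^n-m^n$ are of order $m^{n-1}dk/N^{n-1}=O(d/(dL)^{n-1})$ and vanish since $dL\to\infty$. This yields $Z_{L-k,N-m}/Z_{L,N}\to e^{-\sum_i y_i/\rho}$, and combining the three estimates produces the desired density.

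The main obstacle is precisely this cancellation of higher-order Stirling corrections. In the subregime $d\ll L^{-1/2}$ (for example $d=L^{-3/4}$, for which $dL\to\infty$ still holds), the individual terms $m^n/N^{n-1}$ with $m=O(1/d)$ fail to be $o(1)$, so no truncation of Stirling at any finite order is valid factor by factor. The argument must exploit the identical falling-factorial structure of $\Gamma(N+dL)/\Gamma(N+dL-m-dk)$ and $N!/(N-m)!$ in $m$, so that these large contributions mutually cancel by symmetry and only the smaller cross-terms involving the auxiliary shifts $dL$ and $dk$ survive.
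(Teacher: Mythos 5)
Your proposal is correct, and it follows a genuinely different route through the partition function asymptotics than the paper's proof. The paper writes
\[
\frac{Z_{L-k,N-m}}{Z_{L,N}} = \underbrace{\frac{\Gamma(N-m+d(L-k))}{(N-m)!}}_{B}\cdot \underbrace{\frac{N!}{\Gamma(N+dL)}}_{C}\cdot\underbrace{\frac{\Gamma(dL)}{\Gamma(d(L-k))}}_{D}
\]
and applies full Stirling expansions to $B$ and $C$ separately, obtaining closed-form asymptotic expressions whose product $BCD$ simplifies to $(d/\rho)^{dk}(1+d/\rho)^{\sum_i x_i/d}(1-\tfrac{\sum_i x_i}{\rho dL})^{dL}(1+\tfrac{\sum_i x_i}{\rho N})^{-N}\to e^{-\sum_i x_i/\rho}$. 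In this organization the two factors $B$ and $C$ each carry an offset of roughly $dL$ in the argument of $\Gamma$, and the dangerous corrections cancel implicitly because one uses the untruncated Stirling formula rather than a term-by-term Taylor series. Your decomposition instead groups $\Gamma(N+dL-m-dk)/\Gamma(N+dL)$ with $N!/(N-m)!$, so each factor carries the large $m$-shift, and you Taylor-expand $\log\Gamma$ via the digamma function. This is a perfectly valid alternative, and it has the pedagogical advantage of making explicit the phenomenon you correctly flag: for $1/L\ll d\ll L^{-1/2}$ the individual corrections $m^n/N^{n-1}$ are not $o(1)$, so a finite-order truncation of any single factor fails, and the cancellation between the two $m$-shifted ratios is essential. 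The paper's Stirling route simply hides this inside the multiplicative identities.

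Two small points of care are worth flagging in your argument. First, the cancellation of $m^n/(nN^{n-1})$ is not literally ``identical,'' because the two expansions are centred at $N+dL$ and $N+1$; the residual is governed by $m^n\big(\psi^{(n-1)}(N+dL)-\psi^{(n-1)}(N+1)\big)/n! = O\!\big(m^n\,dL/N^n\big)=O\!\big((dL)^{-(n-1)}\big)$, which does vanish, but this contribution is in addition to the $(m+dk)^n-m^n$ cross-terms you mention. Second, since the Taylor series has infinitely many orders, you should verify the \emph{sum} over $n$ of the residuals is $o(1)$, not just each term; e.g.\ $\sum_{n\ge2}m^n dL/(nN^n)=-dL\big[\log(1-m/N)+m/N\big]=O(dL\,(m/N)^2)=O(1/(dL))\to 0$, and similarly for the cross-terms. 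With these details filled in, your approach delivers the same asymptotic $Z_{L-k,N-m}/Z_{L,N}\to e^{-\sum_i x_i/\rho}$ as the paper, and the remaining factors are handled identically in both proofs.
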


\begin{proof}
To establish convergence of the joint density we have to show for all $n_1 ,\ldots ,n_k$ such that $n_i d\to x_i >0$
\begin{equation}\label{toshownow}
\frac{1}{d^k} \pi_{L,N} \big[ \tilde\eta_1 =n_1,\ldots ,\tilde\eta_k =n_k ]\to \frac{1}{\rho^k} \exp\Big( -\sum_{i=1}^k x_i\Big/\rho\Big)\ .
\end{equation}
In an analogous computation to \eqref{pdcomp}, we get
\begin{align}
&\frac{1}{d^k}\pi_{L,N} [\tilde\eta_1=n_1, \tilde\eta_2=n_2,\ldots ,\tilde\eta_k=n_k]\nonumber\\
&\quad =\frac{1}{d^k}\frac{L(L-1)\cdots (L-k+1)}{N(N-n_1)\cdots(N-\sum_{i=1}^kn_i)}\frac{Z_{L-k,N-\sum_i n_i}}{Z_{L,N}}\prod_{i=1}^k (n_iw(n_i))\nonumber\\
&\quad\simeq \Big(\frac{1}{\rho}\Big)^k \underbrace{\prod_{i=1}^k \Big(\frac{x_i}{d}\Big)^d }_{:=A}\ \underbrace{\frac{\Gamma (N-\sum_i n_i +d(L-k))}{(N-\sum_i n_i )!}}_{:=B}\underbrace{\frac{N!}{\Gamma (N+dL)}}_{:=C}\underbrace{\frac{\Gamma (dL)}{\Gamma (d(L-k))}}_{:=D}\ ,
\label{expo1}
\end{align}
where we used the asymptotic behaviour of the stationary weights \eqref{weights}, and arranged the contributions of the ratio of partition functions in a convenient way. Since $d\to 0$ we have $A\to 1$ and $D\simeq (dL)^{dk}$ using \eqref{ratio}. The latter does not apply to the other two terms since $dL\to\infty$, and a more careful (but straightforward) analysis leads to
$$
C\simeq N^{1-dL} \Big( 1+\frac{d}{\rho}\Big)^{N+dL} \Big( 1-\frac{1}{N}\Big)^N e^{dL-1}
$$
and analogously, using $\frac{\Gamma (N-\sum_i n_i +d(L-k))}{\Gamma (N-\sum_i n_i+dL)}\simeq \big( N-\sum_i n_i +dL\big)^{-kd}\simeq N^{-kd}$,
$$
B\simeq N^{-kd}\big( N-\sum_i n_i \big)^{dL-1} \bigg(\Big( 1+\frac{d}{\rho}\Big) \Big( 1+\frac{d\sum_i n_i}{\rho N}\Big)\bigg)^{\sum_i n_i -N-dL} \Big( 1-\frac{1}{N-\sum_i n_i}\Big)^{\sum_i n_i -N} e^{1-dL}\ .
$$
Therefore we get
$$
BCD\simeq (d/\rho )^{dk} \Big( 1+\frac{d}{\rho}\Big)^{\sum_i x_i /d} \Big( 1-\frac{\sum_i x_i}{\rho dL}\Big)^{dL} \Big( 1+\frac{\sum_i x_i}{\rho N}\Big)^{-N}\to e^{-\sum_i x_i /\rho}\ , 
$$
and inserting into \eqref{expo1} implies \eqref{toshownow}.
\end{proof}

So the condensed phase for any intermediate scale with $dL\to\infty$ has a non-hierarchical structure, locally consisting of independent clusters of average size $\rho /d$. This general behaviour across a large range of scaling regimes is quite remarkable. However, since $dN\to\infty$, the rescaled size-biased samples $d\tilde\eta$ do not form a partition of a compact interval (as in the previous case of $dL\to\alpha$). So our result on convergence of finite marginals does not imply weak convergence of the full sequence $d\tilde\eta$, and we only get a local characterization of the condensed phase. Since the total mass of the condensed phase is $N$, and $k$ in the above result can be chosen arbitrarily large, this at least implies that the volume fraction covered by the condensed phase scales at least as $d$ to leading order.

Note also that the limiting exponential distribution of a rescaled cluster in the condensed phase is not itself the size-biased distribution of a random variable, since this would have density
\[
\frac{\rho}{x} \frac{1}{\rho}e^{-x/\rho } =\frac{1}{x}e^{-x/\rho }\ .
\]
This cannot be normalized due to divergence at $x=0$, and  suggests that the condensed phase does not simply consist of $O(1/d)$ clusters with i.i.d. occupation numbers. If, conditional on the volume covered by the condensed phase, one could probe a cluster size without size bias, it would vanish on the scale $1/d$. This suggests that the volume fraction covered by the condensed phase could indeed be larger than $d$ with many clusters on smaller scales that do not contribute to the total mass to leading order. Details of this behaviour are most likely depending on the particular scaling of $d$, and are very hard to access analytically or even to observe numerically.

\subsection{Simulation results\label{sec:simu}}

\begin{figure}
\begin{center}
\includegraphics[width=0.45\textwidth]{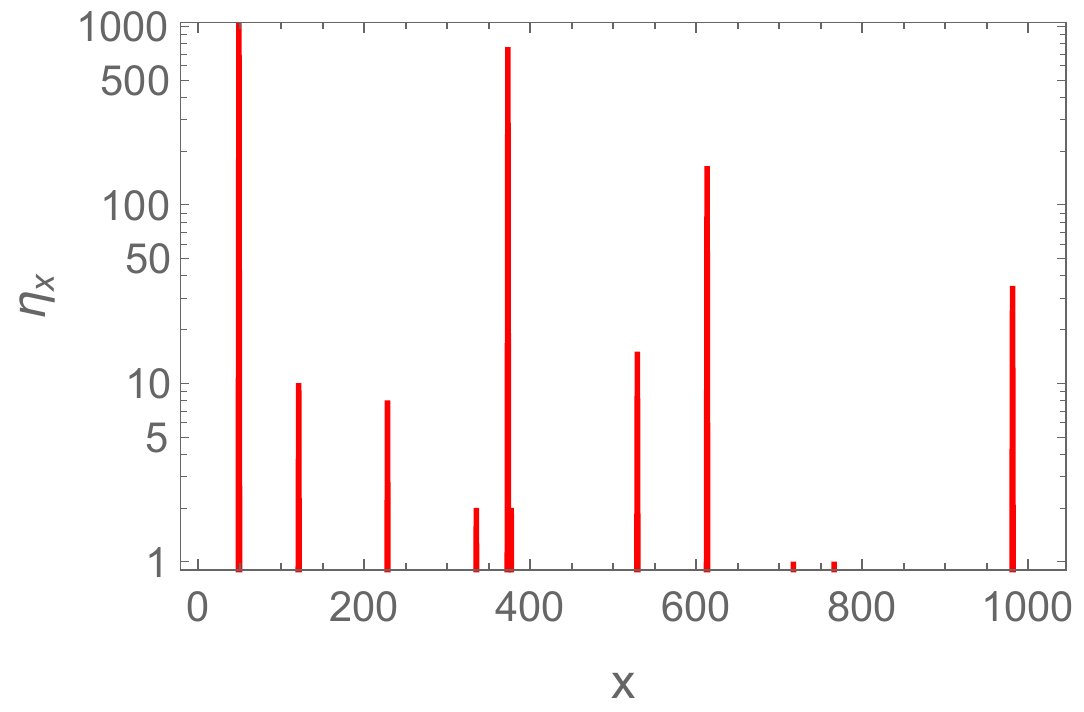}\quad\includegraphics[width=0.45\textwidth]{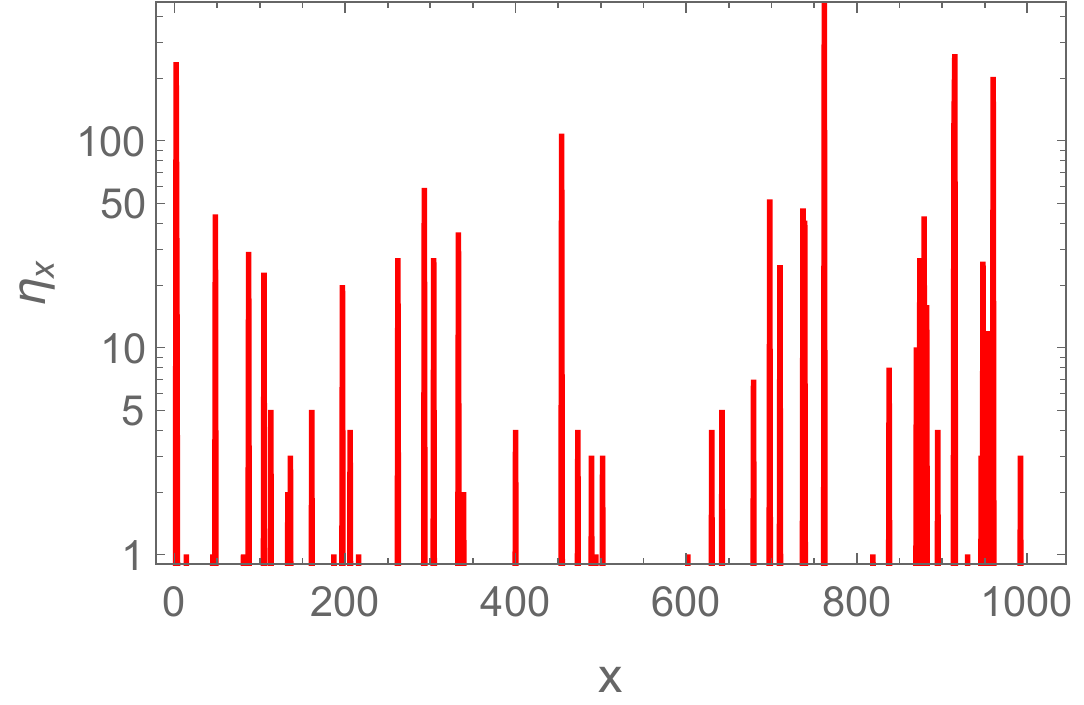}
\end{center}
\caption{\label{fig:confs}
Typical stationary configurations for the inclusion process with $N=2048$ particles on a lattice of size $L=1024$ for TA dynamics with $dL=1$ (left) and CG dynamics with $dL=10$ (right).}
\end{figure}

We illustrate our main results with Monte Carlo simulations of the inclusion process at stationarity. Recall that with \eqref{gene} and \eqref{iprates} the generator describing the dynamics is given by
\begin{equation}\label{ipgen}
\Lcal f(\eta )=\sum_{x,y\in\Lambda} p(x,y) \eta_x (d+\eta_y )\big( f(\eta^{xy})-f(\eta )\big)\ .
\end{equation}
We initialize the system by distributing $N$ particles independently, uniformly at random on the lattice. The stationary distributions $\pi_{L,N}$ \eqref{ipcan} are conditional product measures for all translation invariant or symmetric choices of $p(x,y)$. On the complete graph with $p(x,y)\equiv\frac{1}{L-1}$ one can implement a simple rejection based algorithm to simulate the dynamics, which we summarize in Appendix \ref{appb} and call CG dynamics in the following. We also implemented the standard Gillespie algorithm \cite{gillespie1976general} to simulate totally asymmetric dynamics on a one-dimensional lattice with periodic boundary conditions, i.e. $p(x,y)=\delta_{y,x+1\mathrm{mod}L}$, which we call TA dynamics. 

In both geometries, the number of empty sites grows in time and the particles concentrate in clusters, which exchange particles. Smaller clusters disappear and the average cluster size increases, driving a coarsening process. This leads to stationary distributions where either a balance between cluster aggregation and break-up is reached, which is the case for $d\to 0$ and $dL\to\alpha \in (0,\infty ]$, or the system saturates with a single cluster remaining for $dL\to 0$. 
While for CG dynamics clusters can directly exchange particles, for TA dynamics the clusters are isolated and the coarsening process is limited by particle transport, which has been studied in \cite{cao2014dynamics}. Still, once stationarity is reached (see Appendix \ref{appb} for more details on this), both dynamics provide samples from the same stationary distributions $\pi_{L,N}$ which do not have any spatial correlations. Two typical stationary configurations for CG and TA dynamics are illustrated in Figure \ref{fig:confs}.

\begin{figure}
\begin{center}
\includegraphics[width=0.45\textwidth]{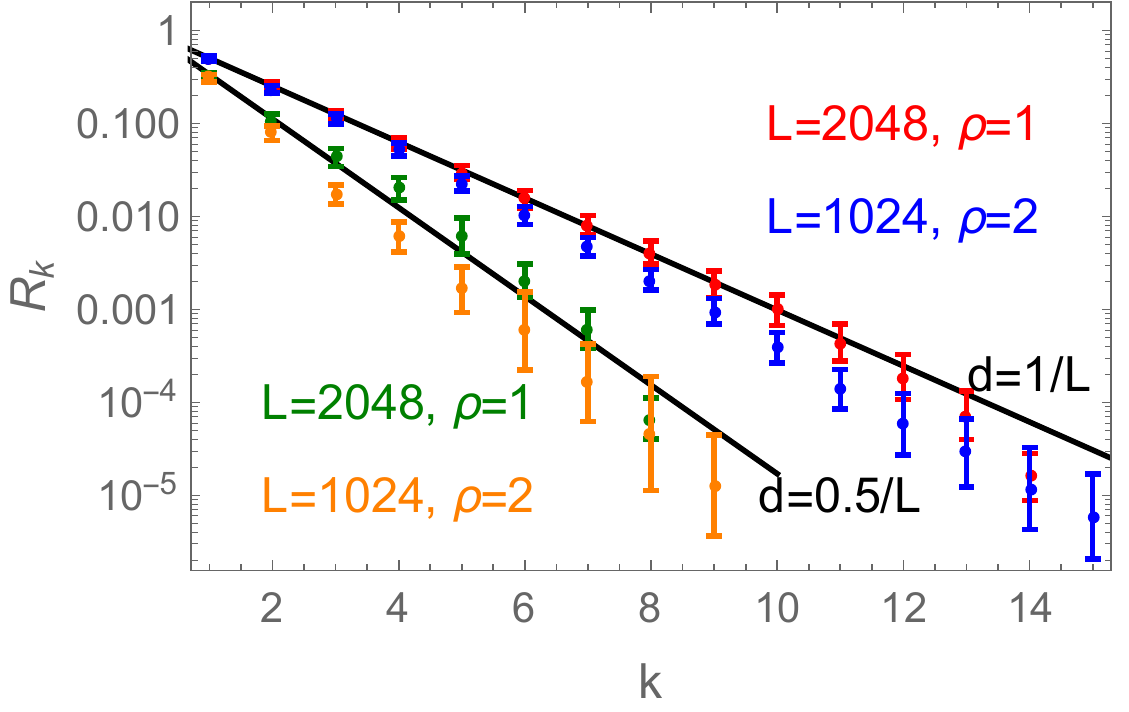}\quad\includegraphics[width=0.45\textwidth]{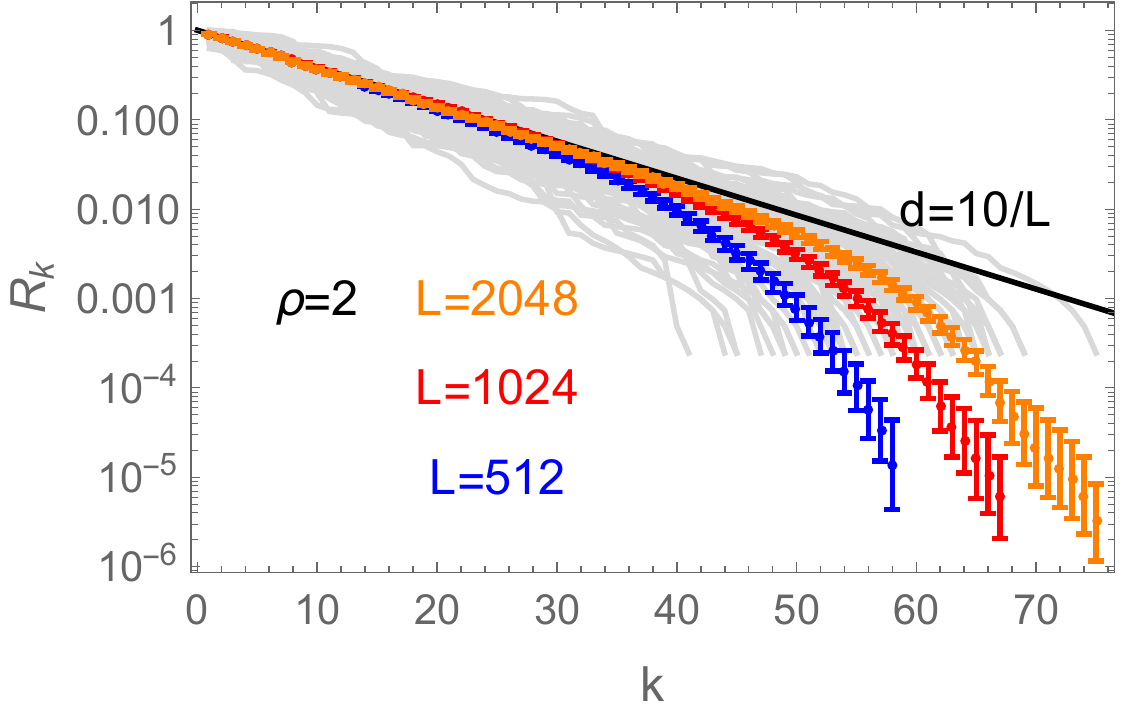}
\end{center}
\caption{\label{fig:gem}
Sample averages of $R_k$ \eqref{rk} against $k$ from CG dynamics for the inclusion process are compared to the expected limiting behaviour \eqref{erk} (black lines) for $dL=0.5$ and $1$ (left) and $dL=10$ (right). Data are given in coloured symbols with error bars and averaged over $100$ realizations $\eta$ and a further $5$ size-biased re-samples $\tilde\eta$ for each. Grey lines on the right show $100$ individual $R_k (\tilde\eta )$ for $L=2048$, the smallest possible non-zero value of $R_k$ here is $1/4096$.}
\end{figure}

Since the complete condensation regime $dL\to 0$ has been studied numerically before \cite{cao2014dynamics}, we focus on the hierarchical results in Theorem \ref{thm1} with $dL\to\alpha\in (0,\infty )$, and comment on intermediate scales with $dL\to\infty$ from Theorem \ref{interthm} later. 
There are no particularly useful results for marginals of Poisson Dirichlet random variables, so we compare size-biased samples of stationary configurations $\tilde\eta$ to the GEM($\alpha$) distribution. For each $k\geq 1$, we define
\begin{equation}\label{rk}
R_k (\tilde\eta):=1-\frac{1}{N}\sum_{i=1}^k \tilde\eta_i \ ,
\end{equation}
the mass fraction remaining on all sites with index $>k$ in the size-biased sample $\tilde\eta$. With the representation \eqref{gem1} of the GEM distribution, Theorem \ref{thm1} implies that for each $k\geq 1$ the random variable $R_k$ converges in distribution to a product of i.i.d.\ random variables $1-U_i$, where $U_i \sim \mathrm{Beta}(1,\alpha)$. With \eqref{gem2} this implies that
\begin{equation}\label{erk}
\langle R_k \rangle_{L,N} \to \Big(\frac{\alpha}{1+\alpha }\Big)^k \quad\mbox{as }L,N\to\infty,\ N/L\to\rho,\ dL\to\alpha\ ,
\end{equation}
which is illustrated in Figure \ref{fig:gem} for various values of $\alpha$ and $\rho$. We see good agreement for small values of $k$, but in addition to statistical errors there are large systematic finite-size effects (illustrated for $\alpha =10$ in Fig.\ \ref{fig:gem} right). These are related to the small amount of non-zero occupation numbers $\# (\eta )$ in typical stationary configurations, leading to a systematic underestimation of $\langle R_k \rangle_{L,N}$. This can be derived from Ewen's sampling formula (see e.g.\ \cite{feng2010poisson}, Theorem 2.8), where $\# (\eta )$ corresponds to the number of different types in a finite sample of size $N$ from a Poisson-Dirichlet population, and can be shown to scale as
\[
\# (\eta )\simeq \alpha\log N\quad\mbox{as }L,N\to\infty,\ N/L\to\rho,\ dL\to\alpha\ .
\]
This logarithmic scaling can be seen in Figure \ref{fig:gem} (right). Convergence of $\# (\eta )/\log N$ to $\alpha$ is very slow on the scale $1/\sqrt{\log N}$ (see \cite{feng2010poisson}, Theorem 2.11), so this is not a good estimator for $\alpha$, and the comparison based on \eqref{erk} in Figure \ref{fig:gem} is more useful.

\begin{figure}
\begin{center}
\includegraphics[width=0.45\textwidth]{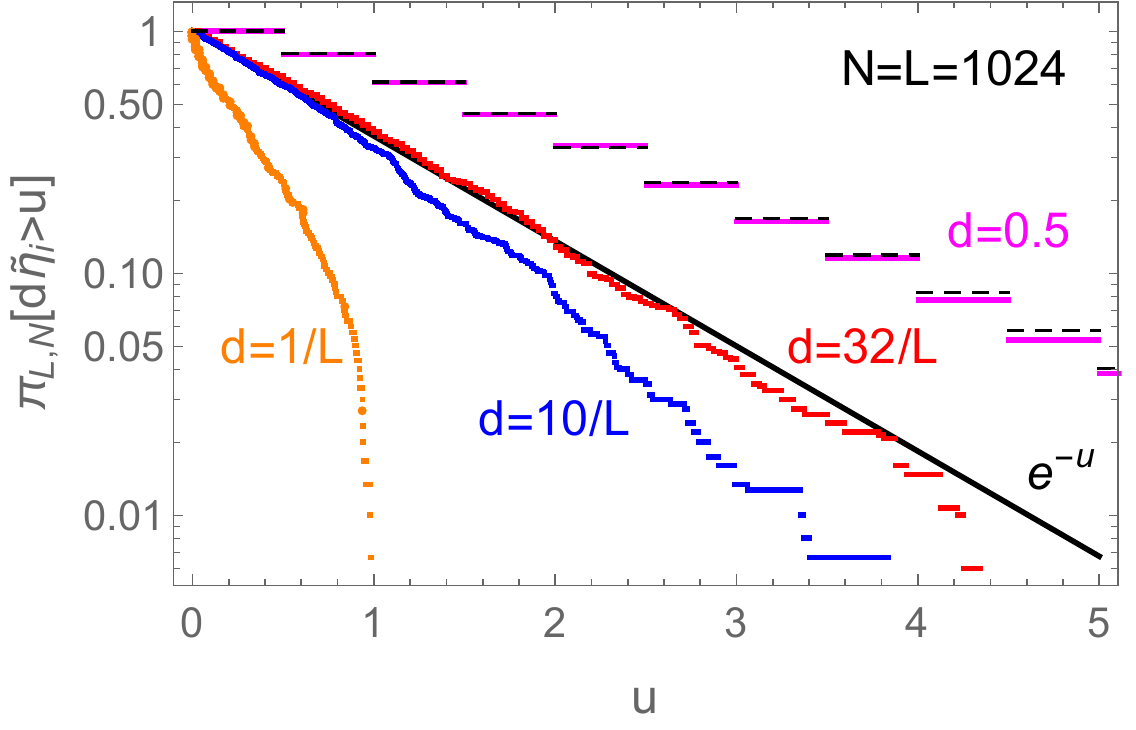}\quad\includegraphics[width=0.45\textwidth]{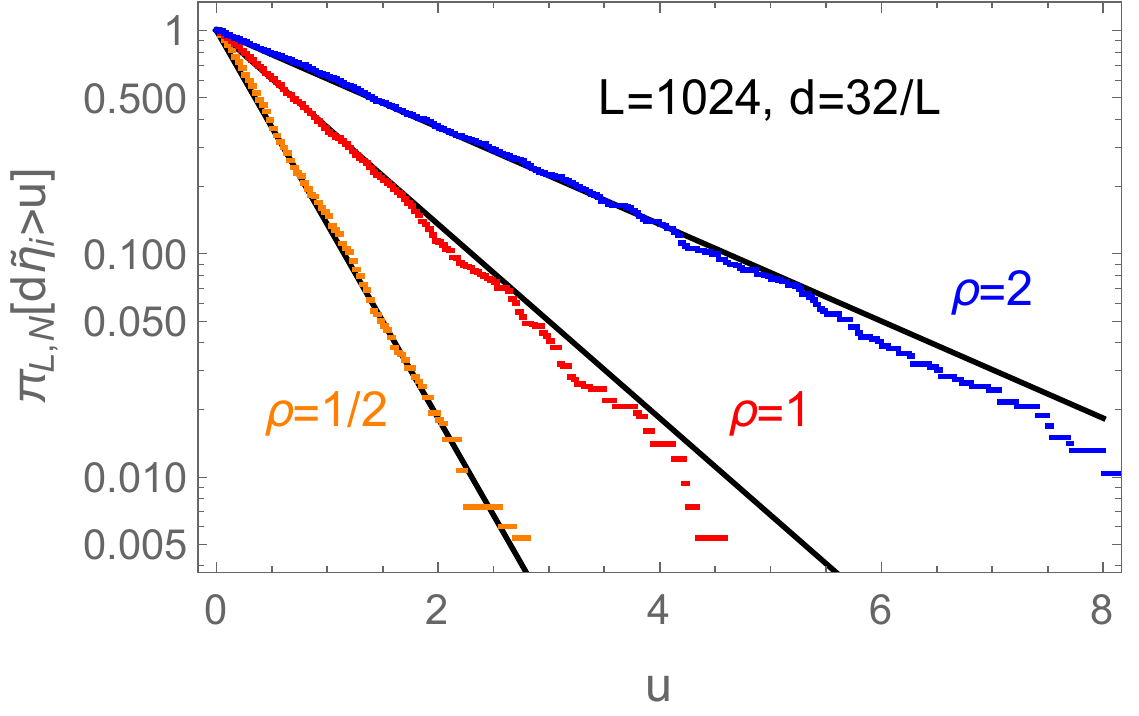}
\end{center}
\caption{\label{fig:inter}
Empirical tail distributions of $d\tilde\eta_i$ for $i=1,2,3$ from $100$ samples $\eta$ and $5$ size-biased re-samples $\tilde\eta$ are shown as coloured step functions, and compared to the theoretical prediction $e^{-u/\rho}$ from Theorem \ref{interthm} for the intermediate regime (full black lines). On the left we fix $\rho =1$ and agreement with $e^{-u}$ improves with increasing $d$.  We also include the size-biased grand canonical prediction \eqref{predid} for the regime of constant $d>0$ (dashed black line), which agrees well with the discrete data for $d=0.5$. On the right we fix $d=32=1/\sqrt{L}$, with good agreement with theory for densities $\rho =0.5,\, 1$ and $2$.}
\end{figure}

For small values of $d$ and finite system size $L$ there is a data cross-over to the condensed regime, with very few occupied sites.  This is very hard to access numerically, but theoretically, a single condensate site is fully consistent with the limit $\alpha\to 0$ in \eqref{erk}. 
For large values of $d$ there is a data cross-over to the intermediate regime $d\to 0$ with $dL\to\infty$, which is covered by Theorem \ref{interthm}. This cross-over is illustrated in Figure \ref{fig:inter} (left), where we plot the empirical tail distribution of $d\tilde\eta_i$ for $i=1,2,3$ based on $5$ size-biased re-samples $\tilde\eta$ of $100$ independent samples of $\eta$ from $\pi_{L,N}$ using CG dynamics. We pick small values for $i$ in order to use the same procedure for all values of $d$ including $1/L$. For larger $d$, larger values for $i$ lead to the same behaviour, and tests reveal that the samples $\tilde\eta_i$ are indeed uncorrelated. For fixed density $\rho =1$ we see that agreement with the exponential tail, $e^{-u/\rho}$ predicted by Theorem \ref{interthm}, improves with increasing $d$ up to $d=32/L=1/\sqrt{L}$. In Figure \ref{fig:inter} (right) for this value of $d$ we see good agreement with the predicted tail for several densities $\rho$.

If we increase $d$ further the system crosses over to the behaviour for constant $d>0$, where we have equivalence of ensembles to grand canonical measures $\nu_\phi$ as explained in Section \ref{sec:equivalence}. Rescaled size-biased variables $d\tilde\eta_i$ will then take discrete values in $d\N$ given by the size-biased version of $\nu_\phi^1$ \eqref{gcm}, i.e.
\begin{equation}\label{predid}
\pi_{L,N} \big[ d\tilde\eta_i =dn\big] \to \frac{n}{\rho}\nu_{\phi (\rho )}^1 [\eta_x =n]=\frac{nw(n)}{\rho z(\phi (\rho ))} \phi (\rho )^n
\end{equation}
as $L,N\to\infty$, $N/L\to\rho$ and $d>0$ fixed. Here $\phi (\rho )=\rho /(d+\rho )<1$ is given in \eqref{phirho} and $z(\phi )=(1-\phi )^{-d}$. This is illustrated for $d=512 L=0.5$ in Figure \ref{fig:inter} (left), where we compare the empirical tail with the tail of the size-biased distribution \eqref{predid} and see very good agreement. Note that for $d\to 0$, we have from the right-hand side of \eqref{predid} that
\[
\frac{1}{d} \frac{n}{\rho}\nu_{\phi (\rho )}^1 [\eta_x =n]\to \frac{1}{\rho} e^{-u/\rho } \quad\mbox{if }nd\to u\ ,
\]
since $nw(n)/d\to 1$, $z(\phi (\rho ))\to 1$ and $\phi (\rho )^n \to e^{-u/\rho }$. So the size-biased grand-canonical distributions scale consistently with the result in Theorem \ref{interthm}.

\section{Large deviations\label{sec:ld}}

In Section \ref{sec:main} we derived the typical stationary behaviour in the condensed phase, and will now study the statistics of large deviations of the maximum occupation number. The most interesting case of complete condensation is covered in Section \ref{sec:ldcom}, for completeness and to introduce the main concepts of large deviations we first cover the non-condensing and intermediate regime. Note that in the hierarchical regime with $dL\to\alpha\in (0,\infty )$, the typical size of the maximum is of order $L$ and it can take any value on that scale with non-vanishing probability.

\subsection{Non-condensing regime}
We first treat the case $d \to d > 0$ as $L \to \infty$ for which we have equivalence of ensembles. 
We find that the probability of observing maximum site occupations of order $L$ decays exponentially in $L$, as would be the case under the grand-canonical measures $\nu_\phi$ \eqref{gcm} where the site occupations are i.i.d.\ with finite mean and variance.
We characterise this decay in terms of the large deviation rate function $I_{\rho}(m)$, which is informally defined as
\[
\pi_{L,N}[\eta_{(1)} = M] \sim e^{-L I_{\rho}(m)},\quad \textrm{for}\quad L,N,M\to \infty\ \textrm{ and }\ N/L \to \rho,\ M/L \to m\,.
\]
This is made precise in the following result which characterizes the local large deviations, and provides an explicit form for the rate function. 
The results in this section imply large deviation principles in the usual sense, see for example \cite{touchette2009large,chleboun2011large} and references therein for details.

\begin{prop}\label{prop:fluidLDP}
	If $d \to d>0$ and $m \in [0,\rho)$, then in the thermodynamic limit
	\begin{align}
	\frac{1}{L}\log \pi_{L,N} \big[\eta_{(1)}=M]\to -I_\rho(m)\quad \textrm{as} \quad N/L\to\rho\ ,\quad M/L \to m \in [0,\rho)\,, 
	\end{align}
	where
	\begin{align}
		\label{eq:fluidLDP}
	I_\rho(m) = (\rho -m) \log  \frac{\rho-m}{\rho-m+d} -\rho \log \frac{\rho}{\rho +d} - d \log \frac{\rho-m+d}{\rho +d}\,.
	\end{align}
\end{prop}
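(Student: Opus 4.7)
The strategy is to reduce the order statistic $\eta_{(1)}$ to a single-site marginal $\eta_1$, use the explicit product form of $\pi_{L,N}$, and extract the rate from a Stirling expansion of the partition function $Z_{L,N}$ given in \eqref{ipcan}.

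First I would establish that $\pi_{L,N}[\eta_{(1)}=M]$ and $\pi_{L,N}[\eta_1=M]$ have the same exponential decay rate. The upper bound follows immediately from the union bound
\[
\pi_{L,N}[\eta_{(1)}=M] \leq L\,\pi_{L,N}[\eta_1=M],
\]
since the factor $L$ is absorbed when dividing the log by $L$. For the matching lower bound I use the inclusion $\{\eta_1=M,\,\max_{y>1}\eta_y \leq M\} \subseteq \{\eta_{(1)}=M\}$ together with
\[
\pi_{L,N}[\eta_{(1)}=M] \geq \pi_{L,N}[\eta_1=M]\Big(1 - (L-1)\,\pi_{L,N}[\eta_2 \geq M+1 \mid \eta_1=M]\Big).
\]
Conditional on $\eta_1=M$, the remaining configuration is distributed as an inclusion process on $L-1$ sites with $N-M$ particles, of density $\rho-m < \infty$, which lies in the non-condensing regime of Section \ref{sec:equivalence}. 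Equivalence of ensembles combined with the exponential tail of the grand-canonical marginal $\nu_{\phi'}$ at $\phi' = (\rho-m)/(\rho-m+d) < 1$ then shows the bracketed correction is $1-o(1)$.

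The main computation is the Stirling asymptotics of the partition function. From the closed form in \eqref{ipcan} and $\log\Gamma(x) = x\log x - x + O(\log x)$, one obtains
\[
\frac{1}{L}\log Z_{L,N} \to (\rho+d)\log(\rho+d) - \rho\log\rho - d\log d \qquad\text{as }N/L\to\rho,
\]
where the $\log L$ contributions cancel because $(\rho+d)-\rho-d=0$. Applying the same expansion to $Z_{L-1,N-M}$ with effective density $\rho-m$, and using $\frac{1}{L}\log w(M)\to 0$ from \eqref{weights}, yields
\[
\frac{1}{L}\log\pi_{L,N}[\eta_1=M] = \frac{1}{L}\log w(M) + \frac{1}{L}\log\frac{Z_{L-1,N-M}}{Z_{L,N}} \to -I_\rho(m),
\]
and collecting terms reproduces the formula \eqref{eq:fluidLDP}.

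The main obstacle is the step where the order statistic is replaced by a fixed marginal. This is where the hypothesis $d\to d>0$ enters essentially: only in the subcritical regime is the conditional system (of density $\rho-m$) itself subcritical, so that the probability of a second site also having occupation near $M$ is exponentially small and the simple union bound is tight. The Stirling computation itself is routine but requires careful bookkeeping, since one needs to track three $\Gamma$-functions with arguments all growing linearly in $L$ and verify that the dominant $L\log L$ contributions cancel before the finite rate emerges.
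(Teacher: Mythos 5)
Your proof reaches the correct rate function, and the Stirling computation is accurate: expanding $\log Z_{L,N} = \log\Gamma(N+dL) - \log N! - \log\Gamma(dL)$ does give $\tfrac{1}{L}\log Z_{L,N} \to (\rho+d)\log(\rho+d) - \rho\log\rho - d\log d$, and combining this with the same expansion for $Z_{L-1,N-M}$ at density $\rho-m$, plus $\tfrac{1}{L}\log w(M)\to 0$, one recovers $-I_\rho(m)$ after rearranging. However, your route is genuinely different from the paper's. The paper avoids Stirling entirely: it decomposes $\pi_{L,N}[\eta_{(1)}=M]$ under a grand-canonical measure $\nu_\phi^L$, uses the exact tilting identity \eqref{tilt} to relate $\nu_\phi^L[\sum\eta=N]$ across different fugacities, chooses $\phi$ so a local CLT for triangular arrays kills the constrained-sum factor, and reads the rate off the tilt. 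Your approach is more computational and exploits the closed Gamma-function form of $Z_{L,N}$, which is model-specific; the paper's tilting argument would generalize to any product-measure model with finite exponential moments. Both are legitimate.

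The one genuinely soft step in your write-up is the reduction from $\eta_{(1)}$ to $\eta_1$. The union-bound upper bound and the decomposition
$\pi_{L,N}[\eta_{(1)}=M] \geq \pi_{L,N}[\eta_1=M]\bigl(1 - (L-1)\,\pi_{L-1,N-M}[\eta_1 \geq M+1]\bigr)$
are fine, but the justification that the correction is $1-o(1)$ -- ``equivalence of ensembles combined with the exponential tail of $\nu_{\phi'}$'' -- does not quite close the gap. Equivalence of ensembles as established in Section \ref{sec:equivalence} is convergence in specific relative entropy, which controls finite-dimensional marginals but not tail probabilities $\pi_{L-1,N-M}[\eta_1 \geq M+1]$ uniformly at scale $M\sim mL$. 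To make this rigorous you would need a quantitative bound on the ratio $Z_{L-2,N-M-k}/Z_{L-1,N-M}$ for $k\geq M+1$, which is precisely where a tilting estimate of the type \eqref{tilt} together with a local limit theorem (the paper's main tool) would enter. You identify this as the ``main obstacle'' and correctly locate where $d>0$ is used, but the argument as stated is not yet a proof of that step.
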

\begin{proof}
The proof follows a standard tilting argument which we only sketch here, more details can be found in \cite{chleboun2011large}. 
First note that for grand-canonical measures \eqref{gcm} with $\phi ,\phi' \in [0,1)$
\begin{equation}\label{tilt}
\nu_{\phi}^L\Big[\sum_x \eta_x = N\Big] =\nu_{\phi'}^L\Big[\sum_x \eta_x = N\Big] \Big(\frac{\phi}{\phi'}\Big)^N \Big(\frac{z(\phi')}{z(\phi )}\Big)^L \ ,
\end{equation}
and recall that $\nu_\phi^1 [\eta_1 =n]=w(n)\phi^n /z(\phi )$ with weights $w(n)$ given in \eqref{weights} and normalization $z(\phi )=(1-\phi )^{-d}$ for all $\phi\in [0,1 )$. 
Since
\[
\pi_{L,N}[\eta_{(1)} = M] = \nu_{\phi}^L\Big[\eta_{(1)} =M \big|\sum_x \eta_x = N\Big]\ ,
\]
and $(\eta_x :x\in \Lambda )$ are i.i.d. under $\nu_\phi^L$, we have
\begin{align*}
  \frac{1}{L}\log \pi_{L,N}&[\eta_{(1)} = M] = \frac{1}{L} \log \nu_{\phi}^L\Big[\eta_{(1)} = M;\ \sum_x \eta_x = N\Big] - \frac{1}{L}\log\nu_{\phi}^L\Big[\sum_x \eta_x = N\Big] \\
   &=  \frac{1}{L} \log \nu_\phi^{L-1}\Big[\sum_{x}\eta_x = N-M ;\ \eta_{(1)} \leq M\Big] + \frac{1}{L}\log \nu_\phi[\eta_1 = M]\\
  &\qquad  
   - \frac{1}{L}\log\nu_{\phi}^L\Big[\sum_x \eta_x = N\Big]\ . 
\end{align*}
Since the grand canonical single site marginals $\nu_\phi$ have finite exponential moments for each $\phi \in [0,1 )$, we may choose a sequence of $\phi$ such that the expected number of particles per site under $\nu_{\phi}[\,\cdot\, ;\, \eta_1 < M]$ is $(N-M)/(L-1)$. Further, since $M/L\to m$, this implies $\phi \to \Phi(\rho -m)$ in the thermodynamic limit, with $\Phi$ given in \eqref{phirho} as the inverse of $R(\phi )$ \eqref{rphip}. 
Since $\nu_{\phi}$ has second moment which converges to $\langle \eta_x^2\rangle_{\Phi(\rho-m)} < \infty$, we may then apply a standard local limit theorem for triangular arrays (see e.g. \cite{davis1995elementary}) to show that with this choice of $\phi$ the first term on the second line vanishes. The same is true for the term in the third line choosing $\phi =\Phi (\rho )= \rho/(\rho+d)$ by equivalence of ensembles proved in Section \ref{sec:equivalence}, and we can conclude using \eqref{tilt} and taking limits.
\end{proof}

\subsection{Intermediate scales}
For the intermediate scale, $d \to 0$ with $dL \to \infty$, we cannot directly apply a local limit theorem for triangular arrays as in the previous case, since with \eqref{rphip} there are no grand-canonical measures with positive densities. 
Here we will make use of Stirling's approximation of the partition function \eqref{ziscale} and truncation arguments to derive the large deviations behaviour of the maximum $\eta_{(1)}$. 
In this regime the probability of observing a maximum site occupation of order $L$ has asymptotic decay rate $dL$. 
\begin{prop}\label{prop:interLDP}
	If $d \to0$ and $dL \gg \log L$, then in the thermodynamic limit we have 
	\begin{align}
	\label{eq:interLDP}
	-\frac{1}{d L}\log \pi_{L,N} \big[ \eta_{(1)}=M\big]\to I_\rho(m):=\log\left(\frac{\rho}{\rho-m}\right) \,,
	\end{align}
	as $N/L\to\rho$ and $M/L\to m\in [0,\rho)$.
\end{prop}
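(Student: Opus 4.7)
The plan is to reduce the rate of $\pi_{L,N}[\eta_{(1)}=M]$ to that of the single-site marginal $\pi_{L,N}[\eta_1=M]$, which is directly computable from Stirling approximations of $w$ and $Z_{L,N}$, and then transfer this via union bound (for an upper bound on probability) and a truncated product (for a lower bound). Both steps will yield the same rate at the scale $dL$.

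First I would compute $-\frac{1}{dL}\log\pi_{L,N}[\eta_1=M] = -\frac{1}{dL}\log\bigl(w(M)Z_{L-1,N-M}/Z_{L,N}\bigr)$ using $w(M)\simeq dM^{d-1}$ and the expansion \eqref{ziscale}. The dominant contribution comes from the power terms, which I would rearrange as
$$(dL{-}1)\log\tfrac{N}{dL}-(d(L{-}1){-}1)\log\tfrac{N-M}{d(L-1)}=(dL{-}1)\log\tfrac{N(L-1)}{L(N-M)}+d\log\tfrac{N-M}{d(L-1)},$$
and dividing by $dL$ gives $\log\frac{\rho}{\rho-m}+o(1)$ (the second term vanishes since $|\log d|/L\to 0$ under $dL\gg\log L$). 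The Stirling prefactors $1/\sqrt{2\pi dL}$ and the weight $w(M)$ contribute $O((\log L+|\log d|)/(dL))=o(1)$. The exponential factor $(1+dL/N)^{N+dL}$ is the delicate one: its logarithm is itself of order $dL$, so a second-order Taylor expansion is needed to see that its ratio with the $(L-1,N-M)$ version differs only by $d+O((dL)^2/N)$, which is $o(dL)$.

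For the upper bound on probability, the ratio $\pi_{L,N}[\eta_1=k+1]/\pi_{L,N}[\eta_1=k]=(k+d)(N-k)/((k+1)(N-k-1+d(L-1)))$ can be checked algebraically to be $<1$ for every $k\in\{0,\ldots,N-1\}$ whenever $dL\to\infty$, so the marginal is strictly decreasing in $k$. Hence
$$\pi_{L,N}[\eta_{(1)}=M]\leq\pi_{L,N}[\eta_{(1)}\geq M]\leq L\,\pi_{L,N}[\eta_1\geq M]\leq LN\,\pi_{L,N}[\eta_1=M],$$
and taking $-\frac{1}{dL}\log$ gives $\liminf\geq I_\rho(m)$ since $\log(LN)/(dL)\to 0$. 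For the matching lower bound on probability I would condition on site $1$:
$$\pi_{L,N}[\eta_{(1)}=M]\geq\pi_{L,N}[\eta_1=M,\,\eta_y<M\ \forall y\neq 1]=\pi_{L,N}[\eta_1=M]\cdot\pi_{L-1,N-M}[\max<M],$$
and show $\pi_{L-1,N-M}[\max<M]\to 1$. For $m\geq\rho/2$ this is trivial since $N-M<M$ for large $L$; for $m<\rho/2$, the reduced system satisfies the same hypotheses ($d(L-1)\to\infty$, $d(L-1)\gg\log(L-1)$) with positive density $\rho-m$, and the upper bound already established applies to give $\pi_{L-1,N-M}[\max\geq M]\leq e^{-dL[\log((\rho-m)/(\rho-2m))+o(1)]}\to 0$.

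The main obstacle will be Step 1, specifically the bookkeeping of the $(1+dL/N)^{N+dL}$ factor: because its logarithm and the LD speed are of the same order, one cannot discard it by an $o(1)$ argument and must establish cancellation to second order between the $Z_{L,N}$ and $Z_{L-1,N-M}$ versions. A secondary, purely organisational, point is that the lower bound in Step 3 invokes the upper bound of Step 2 for the reduced system, so the two must be proved in this order; verifying that the regime hypotheses persist under $(L,N,d)\mapsto(L-1,N-M,d)$ is immediate since $d$ is unchanged and $d(L-1)/\log(L-1)\to\infty$ with $dL/\log L$.
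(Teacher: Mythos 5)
Your proof is correct and follows essentially the same route as the paper's: the upper bound is a union bound over sites combined with the Stirling asymptotics \eqref{ziscale} and \eqref{weights}, and your lower bound via conditioning, $\pi_{L,N}[\eta_1{=}M]\,\pi_{L-1,N-M}[\max{<}M]=w(M)\,Z^{(M-1)}_{L-1,N-M}/Z_{L,N}$, together with the tail estimate $\pi_{L-1,N-M}[\max\geq M]\le (L-1)(N-M)\,\pi_{L-1,N-M}[\eta_1{=}M]$ coming from monotonicity of the single-site marginal, is after cancellation exactly the paper's truncated-partition-function bound \eqref{pbounds} and its error term $(L-1)(N-M)w(M)Z_{L-2,N-2M}/Z_{L-1,N-M}$. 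The differences are presentational (conditional measures versus truncated partition functions), and you supply more explicit Stirling bookkeeping — notably the second-order cancellation for $(1+dL/N)^{N+dL}$ — which the paper leaves implicit; note also that, just as in the paper (which explicitly restricts the error estimate to $M/L\to m>0$), your dichotomy handles $m\in(0,\rho)$ but not the boundary $m=0$ that the statement nominally includes.
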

Note that this rate function is consistent with the limit $d\to 0$ of $I_\rho (m)/d$ in \eqref{eq:fluidLDP}, but the case $d=0$ is not covered by Proposition \ref{prop:fluidLDP} and needs a separate proof.

\begin{proof}
	We firstly extract the contribution due to the maximum site occupation by observing that
        \begin{equation}\label{pbounds}
          \frac{w(M)Z_{L-1,N-M}^{(M)}}{Z_{L,N}}\leq	\pi_{L,N} \big[\eta_{(1)}=M\big] \leq \frac{L w(M)Z_{L-1,N-M}^{(M)}}{Z_{L,N}}\,,
        \end{equation}
        where\quad $\displaystyle Z_{L,N}^{(M)} = \sum_{\eta \in E_{L,N}}\prod_{x \in \Lambda} w(\eta_x)\mathbbm{1}\{\eta_x \leq M\}$\quad
is a truncated canonical partition function.\\
This immediately implies the upper bound
\[
\pi_{L,N} \big[\max_{x\in\Lambda}\eta_x=M\big]
\leq \frac{Lw(M) Z_{L-1,N-M} }{Z_{L,N}}\,.
\]
We can bound from above the total weight of configurations violating the truncation by
\[
Z_{L-1,N-M} -Z_{L-1,N-M}^{(M)}\leq (L-1)(N-M)w(M) Z_{L-2,N-2M}\ ,
\]
where we use monotone decay in $N$ of the weights $w(N)$ \eqref{weights} and the partition function $Z_{L,N}$ \eqref{canme}, which holds since $dL > 1$ for $L$ sufficiently large. 
This leads to a lower bound on $Z_{L-1,N-M}^{(M)}$ in \eqref{pbounds} and we get
\[
\pi_{L,N} \big[\eta_{(1)} =M\big] 
\geq\frac{w(M)Z_{L-1,N-M}}{Z_{L,N}}\left(1 - (L-1)(N-M)w(M)\frac{ Z_{L-2,N-2M}}{Z_{L-1,N-M}}\right)\,.
\]
By applying \eqref{ziscale} together with \eqref{weights} we find that 
\[
(L-1)(N-M)w(M)\frac{ Z_{L-2,N-2M}}{Z_{L,N-M}} \to 0\,, 
\]
        in the thermodynamic limit if $M/L \to m > 0$.
        We conclude by taking logarithms, and again applying \eqref{ziscale} together with \eqref{weights}.  
\end{proof}

We illustrate the rate function for this and the following case of complete condensation in Figure \ref{fig:LDP} and compare to exact numerics obtained for finite system size. The latter are generated using the right-hand side of \eqref{pbounds} and the recursive structure of the canonical partition functions
\begin{equation}\label{numerics}
Z_{L,N} =\sum_{n=0}^N Z_{k,n} Z_{L-k,N-n} \quad\mbox{for all }k=1,\ldots L-1\ .
\end{equation}
The same relation holds for truncated partition functions (see \cite{chleboun2011large} for details). With initial condition $Z_{1,n} =w(n)$, $n=0,\ldots N$ and choosing $k=L/2$ this can be used effectively in an iteration to reach large system sizes.

\subsection{Complete condensation\label{sec:ldcom}}

\begin{figure}
\begin{center}
\includegraphics[width=0.48\textwidth]{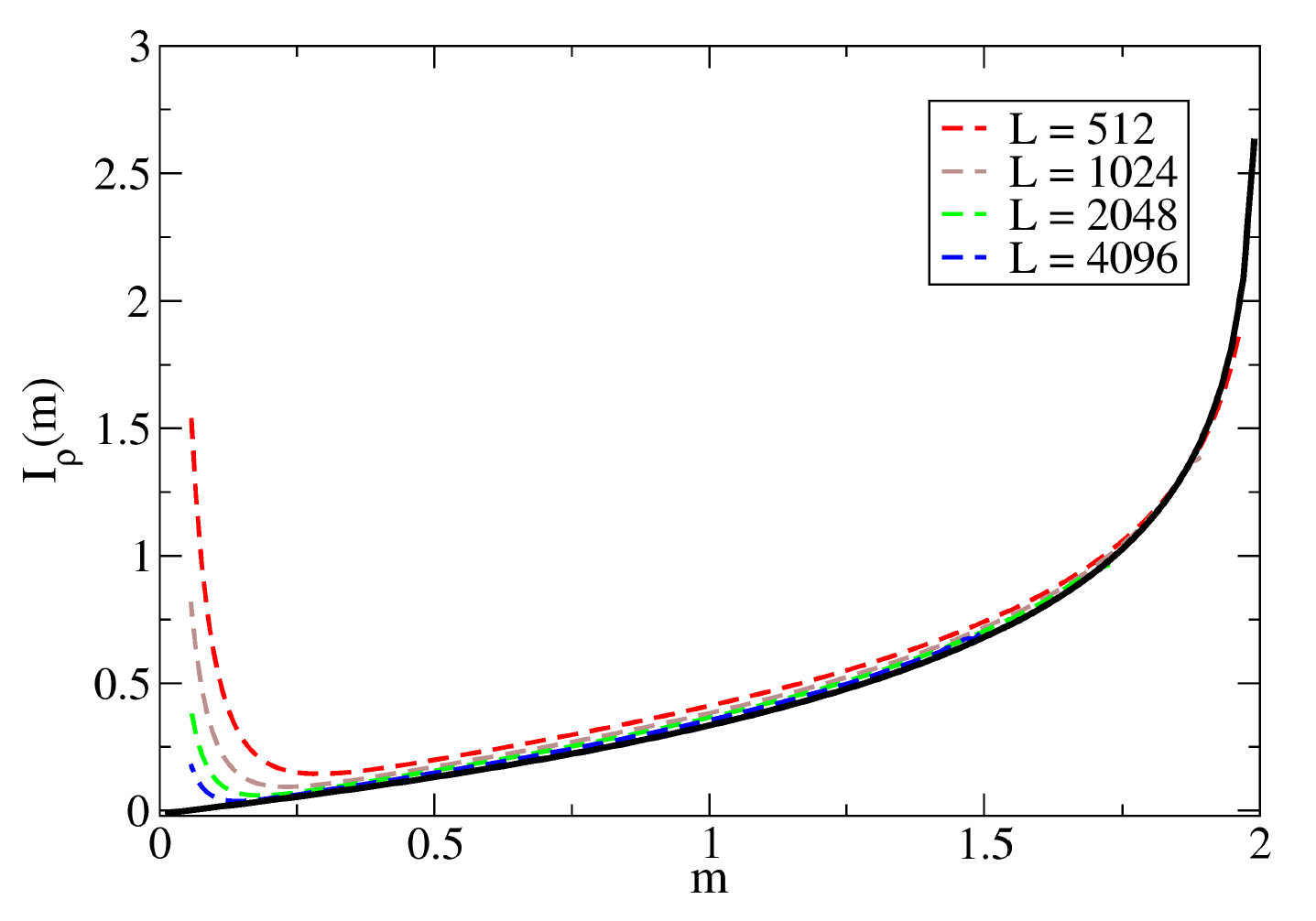}\quad\ \includegraphics[width=0.48\textwidth]{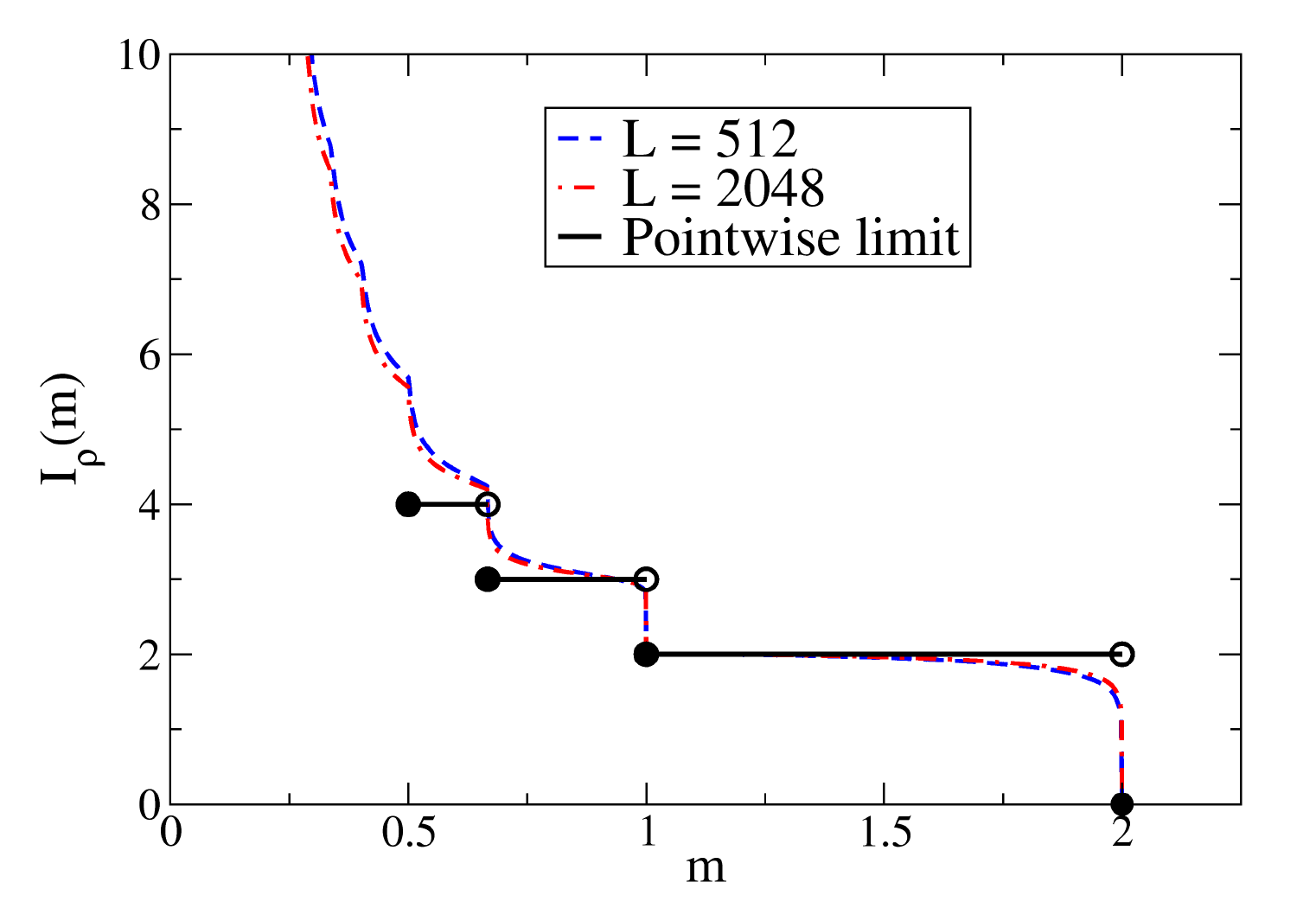}
\end{center}
\caption{\label{fig:LDP}
The large deviation rate functions of the maximum site occupation, $I_\rho(m)$. Theoretical results are given by full black lines and numerics \eqref{numerics} for finite $L$ by dashed coloured lines. Left: The intermediate case as given in \eqref{eq:interLDP}, with numerics for $d=1/\sqrt{L}$. According to Theorem \ref{interthm} the maximum typically contains of order $1/d =\sqrt{L}$ particles, so the location of the minima of $I_\rho (m)$ vanishes with $1/\sqrt{L}$ and there are significant finite size effects close to the origin. Right: The complete condensation case as given in \eqref{eq:ratefntotal}, with numerics for $d=L^{-2}$.\\
}
\end{figure}

In the case $d L \ll 1/\log L$ we have complete condensation as stated in Proposition \ref{propc}.
We characterise the large deviations of the maximum on the scale $L$, which turn out to be dominated by the probability of observing the smallest number of occupied sites required to realise a given size of the maximum. 
To derive this result, it is easier to first understand probabilities of size-biased configurations in analogy to Theorem \ref{thm1}.

\begin{prop}\label{prop:condLDP}
	In the thermodynamic limit $N,L\to \infty$ such that $N/L \to \rho$, with $dL \log L\to 0$ we have 
	\begin{equation}
	\label{eq:condLDP}
	\frac{1}{d^k}\pi_{L,N}[\tilde\eta_1=n_1,...,\tilde\eta_k=n_k] \to \rho^{-k}\prod_{i=1}^{k}(1-x_i)^{i-k-1}\,,
	\end{equation}
	provided that $\frac{n_1}{N}\to x_1 , \frac{n_2}{N}\to (1-x_1)x_2, \cdots \frac{n_k}{N}\to (1-x_1)(1-x_2)\cdots(1-x_{k-1})x_k$ with $x_1,x_2,\ldots,x_k \in (0,1)$. Furthermore, in the same limit
	\begin{equation}
	\label{eq:condLDP-equiv}
	\pi_{L,N}[\tilde\eta_1=n_1,...,\tilde\eta_k=n_k] \simeq \pi_{L,N}\left[\tilde\eta_1=n_1,...,\tilde\eta_k=n_k,\tilde\eta_{k+1}=N-\sum_{i=1}^k n_i\right].
	\end{equation}
\end{prop}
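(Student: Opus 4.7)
The plan is to run essentially the same computation as in Theorem \ref{thm1}, starting from the explicit size-biased formula \eqref{kbimar} and applying the Stirling asymptotics of $Z_{L,N}$ from \eqref{zscaling} in the regime $dL\to 0$. The one extra input compared to Theorem \ref{thm1} is the hypothesis $dL\log L\to 0$, which is needed to ensure that the Stirling correction factors of the form $n^d$ and $N^{dL}$ actually converge to $1$, rather than simply staying bounded; this is the same mechanism already exploited in Proposition \ref{propc}.

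Concretely, for \eqref{eq:condLDP} I would substitute $n_i w(n_i)\simeq d\,n_i^d$ from \eqref{weights} into \eqref{kbimar}. Since each $n_i$ is of order $N=O(L)$, the hypothesis $dL\log L\to 0$ gives $n_i^d=e^{d\log n_i}\to 1$, which accounts for the overall rescaling by $d^{-k}$ on the left-hand side of \eqref{eq:condLDP}. The combinatorial prefactor $L(L-1)\cdots(L-k+1)/[N(N-n_1)\cdots(N-\sum_{i=1}^{k-1}n_i)]$ is then handled using the identity $1-\sum_{i=1}^m n_i/N\to\prod_{i=1}^m(1-x_i)$, and reduces to $\rho^{-k}\prod_{j=1}^{k-1}(1-x_j)^{-(k-j)}$. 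For the partition function ratio $Z_{L-k,N-\sum_i n_i}/Z_{L,N}$ I would apply the first line of \eqref{zscaling}: after the prefactor $(L-k)/L\to 1$ and the correction $(N-\sum_i n_i)^{-dk}\to 1$, what remains is $((N-\sum_i n_i)/N)^{dL-1}$, which tends to $\prod_{i=1}^k(1-x_i)^{-1}$ because $dL-1\to -1$. Collecting the total exponent of each $(1-x_j)$ gives $-(k-j)-1=j-k-1$, matching \eqref{eq:condLDP}.

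For the asymptotic equivalence \eqref{eq:condLDP-equiv}, I would apply \eqref{kbimar} with $k$ replaced by $k+1$ and $n_{k+1}:=N-\sum_{i=1}^k n_i$, so that the partition function factor becomes $Z_{L-k-1,0}=1$. Dividing the $(k+1)$-site expression by the $k$-site expression, the extra denominator factor $N-\sum_{i=1}^k n_i$ cancels against $n_{k+1}$, and one is left with $(L-k)\,w(n_{k+1})/Z_{L-k,n_{k+1}}$. Substituting $w(n_{k+1})\simeq d\,n_{k+1}^{d-1}$ and $Z_{L-k,n_{k+1}}\simeq d(L-k)n_{k+1}^{d(L-k)-1}$ from \eqref{weights} and \eqref{zscaling}, this collapses to $n_{k+1}^{-d(L-k-1)}$, which tends to $1$ by the same $dL\log L\to 0$ argument.

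The principal (and really only) obstacle is to be careful with the subleading Stirling contributions $n^d$ and $N^{dL-1}$: under the weaker hypothesis $dL\to\alpha>0$ of Theorem \ref{thm1} they produced the nontrivial $(1-x_i)^{\alpha-1}$ GEM factors, whereas under $dL\log L\to 0$ the same contributions must be shown to trivialize. No new structural idea beyond the computations already appearing in the proofs of Theorem \ref{thm1} and Proposition \ref{propc} is required.
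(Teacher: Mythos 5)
Your proof is correct and follows essentially the same route as the paper: for \eqref{eq:condLDP} the manipulations of the combinatorial prefactor and the ratio $Z_{L-k,N-\sum_i n_i}/Z_{L,N}$ via \eqref{zscaling} are identical in substance, with $dL\log L\to 0$ killing all the $n^d$ and $N^{dL}$ correction factors exactly as you describe. For \eqref{eq:condLDP-equiv} the paper is marginally slicker — it recognizes the ratio $(L-k)w(n_{k+1})/Z_{L-k,n_{k+1}}$ as $\pi_{L-k,n_{k+1}}[\eta_{(1)}=n_{k+1}]$ and invokes Proposition \ref{propc} directly rather than redoing the Stirling estimate — but your direct computation of $n_{k+1}^{-d(L-k-1)}\to 1$ is precisely the content of that proposition's proof, so the two are equivalent.
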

\begin{proof}
In analogy to \eqref{pdcomp} in the proof of Theorem \ref{thm1} we get
	\begin{equation}
	\frac{1}{d^k}\pi_{L,N}[\tilde\eta_1=n_1,...,\tilde\eta_k=n_k] \simeq \rho^{-k}\prod_{i=1}^{k-1}(1-x_i)^{i-k}\frac{Z_{L-k,N-\sum_{i=1}^k n_i}}{Z_{L,N}}\,,
	\end{equation}
where we also used
\[
N(N-n_1)\cdots(N-\sum_{i=1}^{k-1} n_i)\simeq L^k \rho^k \prod_{i=1}^{k-1} (1-x_i )^{k-i}\ .
\]
	The remaining mass, $N-\sum_{i=1}^k n_i$, is of order $L$ since  $x_1,x_2,\ldots,x_k \in (0,1)$.
	Therefore, applying \eqref{zscaling} to the ratio of partition functions we find
\[
		\frac{Z_{L-k,N-\sum_{i=1}^k n_i}}{Z_{L,N}} \to \frac{1}{(1-x_1)(1-x_2)\ldots(1-x_k)}\,,
\]
	where we used $N^{dL}$, $(N-\sum_{i=1}^k n_i)^{dL} \to 1$, since $dL \log L \to 0$.
	This completes the proof of \eqref{eq:condLDP}.
	
	Finally, for \eqref{eq:condLDP-equiv}, we let $n_{k+1}=N-\sum_{i=1}^kn_i$. Then using \eqref{kbimar} and the fact that $Z_{L,0}=1$ for all $L\geq 1$ we have
		\begin{align*}
	\frac{\pi_{L,N}\left[\tilde\eta_1=n_1,...,\tilde\eta_k=n_k,\tilde\eta_{k+1}=n_{k+1}\right]}{\pi_{L,N}[\tilde\eta_1=n_1,...,\tilde\eta_k=n_k]} &= \frac{(L-k)n_{k+1}w(n_{k+1})Z_{L-k-1,0}}{n_{k+1} Z_{L-k,n_{k+1}}}\\
		&=\pi_{L-k,n_{k+1}}[\eta_{(1)} = n_{k+1}]\,,
	\end{align*} 
	which tends to one by Proposition \ref{propc}. 
\end{proof}

\begin{cor}\label{prop:condLDP-max}
	In the thermodynamic limit $N,L\to \infty$ such that $N/L \to \rho$, $M/N \to x\in (0,1)$ with $dL \log L\to 0$ we have 
	\begin{equation}
	\label{eq:condLDPmax}
	\frac{1}{d^{\lceil 1/x \rceil-1}N^{\lceil 1/x \rceil-2}}\pi_{L,N} \big[\eta_{(1)}=M\big]\to \frac{C(x)}{\rho^{\lceil 1/x\rceil -1}}\,,
	\end{equation}
        where $0<C(x)<\infty $ is an $x$ dependent constant.
\end{cor}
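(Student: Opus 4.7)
The plan is to extract the leading asymptotic of $\pi_{L,N}[\eta_{(1)}=M]$ by expanding it over (i) the number $r$ of sites attaining the maximum value $M$, and then (ii) the number $j$ of additional occupied sites (with strict truncation $\eta_x<M$). By spatial symmetry,
\[
\pi_{L,N}[\eta_{(1)}=M]=\sum_{r\geq 1}\binom{L}{r}w(M)^{r}\,\frac{Z^{(<M)}_{L-r,\,N-rM}}{Z_{L,N}}\,,
\]
where $Z^{(<M)}_{L,K}:=\sum_{\eta}\prod_{x}w(\eta_{x})\,\1\{\eta_{x}<M\}$, and each truncated partition function splits further as
\[
Z^{(<M)}_{L-r,\,N-rM}=\sum_{j\geq k_{0}(r)}\binom{L-r}{j}\,S^{(<M)}_{j}(r)\,,\quad S^{(<M)}_{j}(r):=\sum_{\substack{m_{1},\ldots,m_{j}\geq 1\\ \sum m_{i}=N-rM,\ m_{i}<M}}\prod_{i}w(m_{i})\,,
\]
with $k_{0}(r):=\lceil(N-rM)/M\rceil$, so in particular $k_{0}:=k_{0}(1)=\lceil 1/x\rceil-1$.

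I would identify the leading contribution as $(r,j)=(1,k_{0})$. Because $x\in(1/(k_{0}+1),\,1/k_{0})$, every $m_{i}$ in $S^{(<M)}_{k_{0}}(1)$ is forced into $[(1-k_{0}x)N,\,xN]$, i.e.\ of order $N$ and bounded away from zero. Using $w(m)\simeq d/m$ from \eqref{weights} and rescaling $m_{i}=Ny_{i}$, one approximates
\[
S^{(<M)}_{k_{0}}(1)\simeq\frac{d^{k_{0}}}{N}\,\tilde C(x)\,,\quad \tilde C(x):=\int_{D(x)}\prod_{i=1}^{k_{0}}\frac{1}{y_{i}}\,dy_{1}\cdots dy_{k_{0}-1}\,,
\]
where $D(x)=\{(y_{1},\ldots,y_{k_{0}})\in[1-k_{0}x,\,x]^{k_{0}}:\sum y_{i}=1-x\}$ is a compact $(k_{0}-1)$-dimensional region (parametrised by its first $k_{0}-1$ coordinates) on which the integrand is bounded, so $0<\tilde C(x)<\infty$. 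Combining this with $\binom{L-1}{k_{0}}\simeq L^{k_{0}}/k_{0}!$, $w(M)\simeq d/(xN)$, $Z_{L,N}\simeq d/\rho$ from \eqref{zscaling} (valid since $dL\log L\to 0$ implies $N^{dL}\to 1$), and $L\simeq N/\rho$ yields the claimed scaling with $C(x)=\tilde C(x)/(x\,k_{0}!)$. As a sanity check, for $k_{0}=1$ the formula collapses to $C(x)=1/(x(1-x))$, and for $k_{0}=2$ it gives $C(x)=\log(x/(1-2x))/(x(1-x))$, both matching direct computation.

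The remaining task is to show all other $(r,j)$ contributions are of strictly lower order. For $r=1$, $j=k_{0}+1$, the dominant configurations have one small $m_{i}$; using $\sum_{m=1}^{M-1}w(m)\simeq d\log N$ one gets $S^{(<M)}_{k_{0}+1}(1)\simeq(k_{0}+1)d^{k_{0}+1}(\log N)/N$, so the ratio to the leading $(1,k_{0})$ term is $O(dL\log N)\to 0$ under the hypothesis $dL\log L\to 0$. Larger $j$ iterate this bound, and for $r\geq 2$ each additional maximum site multiplies by a factor $Lw(M)\simeq d/(\rho x)$, while the matching drop in the minimal $j$ only recovers a bounded combinatorial factor, so these terms are suppressed by powers of $1/N$. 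The main obstacle will be carrying out these subleading estimates uniformly in both indices $r$ and $j$, and in particular quantifying the rate of the Riemann sum approximation for $S^{(<M)}_{k_{0}}(1)$ to rigorously justify the leading-order constant. Note also that $\tilde C(x)$ tends to $0$ as $x\to 1/(k_{0}+1)^{+}$ and to $\infty$ as $x\to(1/k_{0})^{-}$, reflecting the change of scaling regime at the endpoints where $k_{0}$ jumps; the open-interval hypothesis in the corollary avoids this edge case.
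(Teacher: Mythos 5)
Your proposal is correct, and it reaches the stated scaling and an explicit, matching expression for $C(x)$ (your sanity checks for $k_0=1,2$ agree with the paper's). It does, however, take a noticeably different route from the paper. The paper builds the corollary directly on Proposition~\ref{prop:condLDP}: it fixes $M\in[N/(k{+}1),N/k)$, decomposes the event $\{\eta_{(1)}=M\}$ into size-biased cells $\{\tilde\eta_1=n_{\sigma(1)},\ldots,\tilde\eta_{k+1}=n_{\sigma(k+1)}\}$ (summed over permutations and over ordered $n_2,\ldots,n_k$) plus the remainder $\{\eta_{(1)}=M,\ \tilde\eta_{k+2}>0\}$, kills the remainder using \eqref{eq:condLDP-equiv}, and reads off the leading order and the constant from the already-proved limit \eqref{eq:condLDP}. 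You instead bypass the size-biased machinery entirely: you expand $\pi_{L,N}[\eta_{(1)}=M]$ over the number $r$ of sites attaining the maximum and the number $j$ of additional occupied sites, identify $(r,j)=(1,k_0)$ as dominant, and recompute the leading asymptotics of the truncated partition sums from scratch via a Riemann-sum approximation using $w(m)\simeq d/m$ and \eqref{zscaling}. Your approach is more elementary and self-contained, and it makes the combinatorial origin of the constant $C(x)$ very transparent; the paper's approach is shorter because it recycles Proposition~\ref{prop:condLDP}, which already packages the ratio-of-partition-functions asymptotics and the $o(d)$ control on extra occupied sites that you must re-derive (your estimates of $S^{(<M)}_{k_0+1}(1)\simeq(k_0{+}1)d^{k_0+1}\log N/N$ and the $1/N$ suppression for $r\ge 2$). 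You correctly flag that the remaining work is to make the subleading bounds uniform in $r$ and $j$ and to quantify the Riemann-sum error; the paper faces an analogous issue (uniformity of \eqref{eq:condLDP-equiv} over the cells in its union) but hides it inside the earlier proposition. Both proofs are sketches at the same level of rigour, and your constant $C(x)=\tilde C(x)/(x\,k_0!)$ is an equivalent closed form to the paper's permutation-sum integral.
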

\begin{proof}
	The result follows rather directly from the previous proposition, and we sketch the main calculations required.
	First fix $M \in [N/2,N)\cap \mathbb{N}$, then conditioned on the event  $\{\eta_{(1)}{=} M\}$ the configuration must contain at least two non-empty sites. Observe that $\{\eta_{(1)}{=} M\}$ is given by the disjoint union
	\[ \{\eta_{(1)} = M\} = \{\eta_{(1)}=M,\,\tilde\eta_3 > 0\} \cup \{\tilde\eta_{1} = M,\tilde\eta_{2}=N-M\}\cup\{\tilde\eta_{1} = N-M,\tilde\eta_{2}=M\} \,.
	\] 
	From \eqref{eq:condLDP-equiv} in Proposition \ref{prop:condLDP} we see that $\pi_{L,N} \big[\eta_{(1)}=M\,;\,\tilde\eta_3 > 0 \big]$ decays to zero faster than $d$.
	Applying \eqref{eq:condLDP} to the probability of the remaining two events we find
	\[
		\frac{1}{d} \pi_{L,N} \big[\eta_{(1)}=M\big]  \to \frac{1}{\rho x (1-x)}\quad\mbox{so\quad $C(x)=\frac{1}{x(1-x)}$ for $x\in [1/2,1)$}\ .
	\]

More generally, fix $k \in \mathbb{N}$ and $M \in [N/(k+1),N/k)$, let $n_1 = M$, then we can again decompose as a disjoint union as follows
	\begin{align*}
		 \{\eta_{(1)} = M\} =& \{\eta_{(1)}=n_1,\,\tilde\eta_{k+2} > 0 \big\}\\
		 & \bigcup_{\sigma \in S_{k+1}}\bigcup_{\substack{n_2,\ldots,n_{k}\,:\,\\ n_1\geq n_2\geq\ldots\geq n_{k+1}}} \{ \tilde \eta_{1}=n_{\sigma(1)},\tilde \eta_{2}=n_{\sigma(2)},\ldots,\tilde \eta_{k+1}=n_{\sigma(k+1)}\}
	\end{align*}
	where $S_{k+1}$ is the set of permutations of $\{1,2,\ldots,k+1\}$ and $n_{k+1} =N-\sum_{i=1}^{k}n_{i}$. In order for $n_1\geq n_2\geq\ldots\geq n_{k+1}$ to hold we must have that $(k+1-i)n_{i+1} \geq N-\sum_{j=1}^{i} n_j$ for each $i \in \{1,\ldots,k-1\}$.
	 Again with \eqref{eq:condLDP-equiv}, the probability of the  event $\{\eta_{(1)}=n_1,\,\tilde\eta_{k+2} > 0 \big\}$ decays faster than $d^{k}L^{k-1}$. Applying \eqref{eq:condLDP} yields
	\[
	\frac{1}{d^k}\pi_{L,N}[\eta_{(1)} = M]\simeq N^{k-1} \frac{1}{\rho^k}\underbrace{\sum_{\sigma \in S_{k+1}} \int^x_{\frac{1-x}{k}}{\dots}\int_{\frac{\prod_{i=1}^{k-1}(1-x_i)}{2}}^{x_{k-1}} \prod_{i=1}^{k}(1-x_{\sigma(i)})^{i-k-1} dx_k{\ldots} dx_{2}}_{:=C(x)}\,, 
	\]
        and \eqref{eq:condLDPmax} follows.
\end{proof}

If we take $d = L^{-\gamma}$ with $\gamma > 1$ then we may summarize Corollary \ref{prop:condLDP-max} in terms of a large deviation rate function (with speed $\log L$), as follows
\begin{align}
  \label{eq:ratefntotal}
  -\frac{1}{\log L} \log \pi_{L,N}[\eta_{(1)} = M] \to I_\rho(m) = (\lceil \rho/m \rceil - 1) \gamma - (\lceil \rho/m \rceil - 2).
\end{align}
This is illustrated in Figure \ref{fig:LDP} (right) for $\gamma = 2$.

\section{Discussion\label{sec:dis}}

\subsection{Summary}

We have established a complete picture for condensation in the inclusion process in the thermodynamic limit, and characterized the condensed phase in several regimes using size-biased sampling of configurations. Our results cover the full scaling regime of the diffusion parameter $d$, only excluding some narrow bands of size $\log L/L$ for complete condensation and large deviations. A particularly interesting regime is the hierarchical structure discussed in Section \ref{sec:pd} related to the GEM and the Poisson-Dirichlet distribution. This is well established in the context of population genetics \cite{feng2010poisson}, where the full structure of Dirichlet multinomials has been exploited to derive very detailed results for Moran models, which can be interpreted as inclusion processes. We derived our results using only the most general properties of inclusion processes so that our approach can be easily transferred to other systems, and we give more details in the next subsection.

The Poisson-Dirichlet distribution has been identified as the unique stationary distribution of split-merge dynamics of clusters \cite{pitman2002,diaconis2004poisson}, where split and merge rates are proportional to cluster sizes. Our results show that the inclusion process can be seen as a generic 'monomer exchange' version of such dynamics, where now only single particles are exchanged but with the same proportionality of rates in the inclusion interaction. 
It would be very interesting to investigate this connection in detail in the context of Poisson-Dirichlet diffusions in analogy to \cite{costantini2017wrightfisher}. 
The crucial prerequisite to see Poisson-Dirichlet statistics in particle systems such as the inclusion process is the asymptotic behaviour of the stationary weights \eqref{weights},
\[
w(0)=1\ ,\quad w(n)=O(d) \mbox{ for all }n\geq 1\mbox{ as }L\to\infty\ ,\quad\mbox{and}\quad w(n)/d\simeq n^{-1} \mbox{ as }n\to\infty\ .
\]
The fact that $w(n)$ vanishes proportionally to $d$ as $L\to\infty$ for all $n>0$ leads to $\rho_b =\rho_c =0$ and condensation with an empty bulk. The structure of the condensed phase is determined by the $1/n$ decay of stationary weights for large occupation numbers. This is quite robust, as is discussed in the next subsection. There we summarize some previous results and connections to other particle systems with Poisson-Dirichlet statistics.

\subsection{Other particle systems with Poisson-Dirichlet statistics}

The model studied in \cite{jack2017emergence} consists of $N$ particles moving diffusively on a one-dimensional torus of length $L$, subject to a logarithmic attractive potential and short-range hard-core exclusion. The weak attraction leads to the formation of large gaps between groups of particles, and the distances $y=(y_1 ,\ldots ,y_N )$ between particles have a stationary distribution of the form \eqref{canme} with weights $w(y)=y^{-\beta}$, 
where $\beta <1$ corresponds to a dimensionless inverse temperature controlling the strength of the noise. So the rescaled distances $\frac{1}{L} y$ provide a partition of the unit interval and follow a Dirichlet($1-\beta ,\ldots ,1-\beta$) distribution. Of particular interest in \cite{jack2017emergence} is the temperature scaling $\beta=\frac{N-b}{N-1}\nearrow 1$ as $N\to \infty$ with $b>1$, where Theorem 2.1 in \cite{feng2010poisson} directly applies so that the order statistics
\[
\frac{1}{L}\hat y \stackrel{D}{\longrightarrow} \mathrm{PD}(b-1 )\quad\mbox{as }N,L\to\infty\ ,\quad L/N\to\rho\ ,
\]
converges in distribution to a Poisson-Dirichlet partition of $[0,1]$. Indeed, the corresponding Beta($1,b-1$)  distribution of the first size-biased marginal $\tilde y_1$  as in \eqref{gem1} is established independently in \cite{jack2017emergence} without mentioning the connection to the Poisson-Dirichlet distribution. 
Note that in this model gaps between particles correspond to cluster sizes, and the average cluster size is therefore $L/N$. 
A related paper with a hierarchical clustering phenomenon for interacting diffusions on a ring is \cite{andres2010particle}, and to our knowledge these continuous models are the only particle systems where a connection to Poisson-Dirichlet statistics has been recognized so far. The Brownian energy process introduced in \cite{giardina2007duality,giardina2009duality} as a dual model to the inclusion process exhibits stationary product measures with chi-squared marginals, and conditioning on the total sum of occupation numbers leads to the same canonical distributions as the model in \cite{jack2017emergence}.

To test the robustness of our results against small changes in the stationary weights $w(n)$, it is useful to consider zero-range processes. For any given $w(n)$ it is well known that a process with the jump rate for a cluster of size $n$ to lose a particle given by
\[
u(n) =\frac{w(n-1)}{w(n)}\quad\mbox{for }n\geq 1
\]
exhibits stationary product measures of the form \eqref{gcm}  (see e.g.\ \cite{spitzer1970interaction,cocozza1985processus} and references therein). 
Using the weights \eqref{weights} for the inclusion process this leads to jump rates
\begin{equation}\label{zrates}
u(n)=\frac{n}{d+n-1}\quad\mbox{for all }n\geq 1\ ,
\end{equation}
so that $u(1)=1/d$ diverges in a scaling limit with $d\to 0$. All other rates are bounded and converge as
\[
u(n)\to \frac{n}{n-1} \quad\mbox{as }L\to\infty\mbox{ for all }n\geq 2.
\]
A zero-range process with rates \eqref{zrates} has exactly the same stationary distributions \eqref{canme} as the inclustion process and all our results apply. Condensation in zero-range processes has been a major research area in recent years (see e.g.\  \cite{evans2000phase,grosskinsky2003condensation, chleboun2014condensation}), where decreasing rates $u(n)\simeq 1+b/n$ lead to stationary weights of order $n^{-b}$, so that $\phi_c =1$ and the critical density is given by (see discussion in Section \ref{sec:spm})
\[
\rho_c =R(1) =\frac{1}{z(1)}\sum_{n=1}^\infty nw(n)<\infty \quad\mbox{for }b>2\ .
\]
In such models, condensation is driven by strong enough on-site attraction between particles. The rates \eqref{zrates} have asymptotic behaviour
\begin{equation}\label{zrates2}
u(n)\simeq \frac{n}{n-1} \simeq 1+\frac{1}{n}\quad\mbox{as }n\to\infty
\end{equation}
and the attraction between particles is not strong enough. Instead, cluster coarsening and condensation is driven by divergence of $u(1)=1/d$, which ensures that $\rho_b =0$ in the bulk of the system and the remaining mass concentrates on a number of lattice sites decreasing in time.

We have checked numerically that the particular form of the rates \eqref{zrates} is in fact not important, and choices of the form $u(n)=n/(n-1)$ or $u(n)=1+1/n$ for $n\geq 2$ lead to the expected Poisson-Dirichlet statistics at stationarity for $u(1)=1/d\simeq L/\alpha$ with $\alpha >0$. 
This can be checked analytically on a case-by-case basis, but it is known that in general the asymptotic behaviour of the partition function and condensation behaviour may depend sensitively on perturbations of the rates (see e.g.\ \cite{jeon2010phase} and  \cite{delmolino2012condensation,grosskinsky2008instability}), so we are currently not able to prove a general result analogous to Theorem \ref{thm1} based only on asymptotics of stationary weights or jump rates.

\section*{Acknowledgements}
We are grateful to Robert Jack for helpful discussions and comments. S.\ G.\ acknowledges partial support from the Engineering and Physical Sciences Research Council (EPSRC), Grant No.\ EP/M003620/1. 
This research project is supported by Mahidol University.

\appendix

\section*{Appendix}
\section{Condensation and phase separation\label{appa}}

For completeness we summarize some implications of Definition \ref{cdef} on phase separation and divergence of higher moments, using only the definition itself without any further assumptions on the canonical measures.  
Assume that we have a condensing particle system on the state space $E_{L,N}$ according to Definition \ref{cdef}, with canonical distributions $\pi_{L,N}$ and limiting single-site marginal $\nu_\rho$ as defined in \eqref{margi}. 
Weak convergence of $\pi_{L,N}$ to $\nu_\rho$ in the thermodynamic limit $N,L\to\infty$, $N/L\to\rho$ is equivalent to convergence of expectations of bounded test functions, so that for any $K>0$
$$
\langle \eta_x \1_{\eta_x \leq K}\rangle_{L,N} \to \langle \eta_x \1_{\eta_x \leq K}\rangle_\rho\ .
$$
Now taking a second limit $K\to\infty$ the right-hand side converges to $\rho_b =\langle\eta_x \rangle_\rho$, which is strictly smaller than $\rho$ in a condensing system (so that both limits do not commute).

The two limits in this order can be used to characterize phase separation as explained in Section \ref{sec:setting} on the level of single-site marginals, where $\eta_x \1_{\eta_x \leq K}$ describes the bulk part of the distribution and $\eta_x \1_{\eta_x > K}$ the condensed part. Definition \ref{cdef} implies that the condensed phase is supported on a vanishing volume fraction but contains a non-zero fraction of the total mass. In the limit $L,N\to\infty$, $N/L\to\rho$ and then $K\to\infty$ we get
\begin{alignat}{4}
& && &&\quad\mbox{condensed} && \quad\mbox{bulk/background}\nonumber\\
&\ \ \mbox{mass fraction} &&\quad \langle\eta_x \rangle_{L,N} &&=\langle\eta_x \1_{\eta_x >K} \rangle_{L,N}\ &&+\ \langle \eta_x \1_{\eta_x \leq K}\rangle_{L,N}\nonumber\\
& &&\quad\ \ \to\rho &&\quad\to\rho -\rho_b &&\qquad\to\rho_b\nonumber\\
&\mbox{volume fraction} &&\quad \langle 1\rangle_{L,N} &&=\langle\1_{\eta_x >K} \rangle_{L,N}\ &&+\ \langle \1_{\eta_x \leq K}\rangle_{L,N}\nonumber\\
& &&\quad\ \ =1 &&\quad\to 0 &&\qquad\to 1\qquad .\label{phasesep}
\end{alignat}
This follows simply from convergence for bounded test functions in the bulk and conservation of total probability and mass. It implies in particular that in this ordered limit
$$
\langle \eta_x |\eta_x \leq K\rangle_{L,N} \to \rho_b \quad\mbox{and}\quad \langle \eta_x |\eta_x > K\rangle_{L,N} \to \infty
$$
for the average occupation numbers in the bulk and condensed phase, respectively.

A further interesting property that is often used is that condensation leads to the divergence of higher order moments, due to the contribution of the condensed phase. This is implied by the following general result.

\begin{prop}\label{propcond}
Assume that a system exhibits condensation as in Definition \ref{cdef} in the thermodynamic limit with density $\rho$. Then for all $x\in\Lambda$ and any positive function $f:\N_0 \to\R^+$ with $f(n)\to\infty$ as $n\to\infty$ we have 
\begin{equation}
\big\langle \eta_x f(\eta_x ) \big\rangle_{L,N} \to\infty \quad\mbox{and}\quad \big\langle \eta_x /f(\eta_x ) \big\rangle_{L,N} \to\big\langle \eta_x /f(\eta_x ) \big\rangle_{\rho}\,,
\end{equation}
as $L,N\to\infty$, and $N/L\to\rho$.
\end{prop}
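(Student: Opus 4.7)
The proof is a double truncation argument that exploits the two inputs from Definition \ref{cdef}: (i) weak convergence of single-site marginals, i.e.\ \eqref{weak} for bounded continuous $f\in C_b(\N_0)$, and (ii) exact mass conservation $\langle\eta_x\rangle_{L,N}=N/L\to\rho$ combined with the bulk mass $\rho_b=\langle\eta_x\rangle_\rho<\rho$. For each fixed truncation level $K$, the functions $\eta_x\1_{\eta_x\leq K}$ and $\1_{\eta_x\leq K}$ are bounded, so their expectations converge under $\pi_{L,N}$ to the corresponding expectations under $\nu_\rho$; the ``missing mass'' $\rho-\rho_b$ must therefore sit on the event $\{\eta_x>K\}$ in the limit. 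This is exactly the phase-separation picture in \eqref{phasesep}, and is the only ingredient I need.

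For the first assertion, I would fix $M>0$ arbitrary and, using $f(n)\to\infty$, choose $K=K_M$ with $f(n)\geq M$ for all $n\geq K_M$. Then
\begin{equation*}
\langle\eta_x f(\eta_x)\rangle_{L,N}\geq M\,\langle\eta_x\1_{\eta_x\geq K_M}\rangle_{L,N}=M\Big(\frac{N}{L}-\langle\eta_x\1_{\eta_x<K_M}\rangle_{L,N}\Big).
\end{equation*}
Since $\eta_x\1_{\eta_x<K_M}$ is bounded, weak convergence gives $\langle\eta_x\1_{\eta_x<K_M}\rangle_{L,N}\to\langle\eta_x\1_{\eta_x<K_M}\rangle_\rho\leq\rho_b$, so $\liminf\langle\eta_x f(\eta_x)\rangle_{L,N}\geq M(\rho-\rho_b)$. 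As $M$ is arbitrary and $\rho-\rho_b>0$, divergence follows.

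For the second assertion, set $\varepsilon_K:=\sup_{n>K}1/f(n)$, which tends to $0$ as $K\to\infty$ by $f(n)\to\infty$. Split
\begin{equation*}
\langle\eta_x/f(\eta_x)\rangle_{L,N}=\langle\eta_x/f(\eta_x)\,\1_{\eta_x\leq K}\rangle_{L,N}+\langle\eta_x/f(\eta_x)\,\1_{\eta_x>K}\rangle_{L,N}.
\end{equation*}
The first term is the expectation of a bounded function, so it converges to $\langle\eta_x/f(\eta_x)\,\1_{\eta_x\leq K}\rangle_\rho$ by weak convergence. The second term is bounded above by $\varepsilon_K\langle\eta_x\1_{\eta_x>K}\rangle_{L,N}\leq\varepsilon_K\,N/L$, uniformly in $L,N$. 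Taking the same split under $\nu_\rho$ and using $\langle\eta_x/f(\eta_x)\,\1_{\eta_x>K}\rangle_\rho\leq\varepsilon_K\rho_b$, I obtain
\begin{equation*}
\limsup_{L,N\to\infty}\big|\langle\eta_x/f(\eta_x)\rangle_{L,N}-\langle\eta_x/f(\eta_x)\rangle_\rho\big|\leq\varepsilon_K(\rho+\rho_b),
\end{equation*}
and sending $K\to\infty$ gives the claim; finiteness of the limit follows from the same bound.

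There is no real obstacle beyond care with the order of limits: one must send $L,N\to\infty$ first (so that weak convergence on bounded sets kicks in) and only then $K\to\infty$ (to recover the full mass $\rho$ and eliminate the $\varepsilon_K$ error). Everything else reduces to monotone/bounded convergence, and no structural assumption on $\pi_{L,N}$ beyond Definition \ref{cdef} is used.
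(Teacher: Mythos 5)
Your proof is correct and follows essentially the same double-truncation argument as the paper: for the first assertion you bound below by $M(\rho-\rho_b)$ via the mass-conservation identity and weak convergence on bounded sets, which is the paper's argument with $M$ playing the role of $\min_{n>K}f(n)$; for the second you split at a truncation level and control the tail by $\varepsilon_K=\sup_{n>K}1/f(n)$. Your treatment of the second assertion is marginally cleaner than the paper's (you bound $|\langle\eta_x/f(\eta_x)\rangle_{L,N}-\langle\eta_x/f(\eta_x)\rangle_\rho|$ symmetrically in one step rather than establishing separate $\limsup$ and $\liminf$ bounds), but this is a cosmetic rather than substantive difference.
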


\begin{proof}
For any fixed $K>0$ we have
\begin{align*}
\big\langle \eta_x f(\eta_x ) \big\rangle_{L,N} &=\sum_{n=0}^\infty n f(n)\pi_{L,N} [\eta_x =n] \geq \min_{n>K} f(n) \sum_{n=K+1}^\infty n \pi_{L,N} [\eta_x =n]\\
&=\min_{n>K} f(n)\Big(\frac{N}{L} -\langle\eta_x \1_{\eta_x \leq K}\rangle_{L,N}\Big) \to \min_{n>K} f(n)\big(\rho -\langle\eta_x \1_{\eta_x \leq K}\rangle_\rho \big)
\end{align*}
as $L,N\to\infty$, $N/L\to\rho$. 
This holds for all $K>0$ and $\rho -\langle\eta_x \1_{\eta_x \leq K}\rangle_\rho \to \rho -\rho_b >0$ as $K\to\infty$ with \eqref{phasesep}, so there exists $C>0$ such that
$$
\big\langle \eta_x f(\eta_x ) \big\rangle_{L,N} \geq C \min_{n>K} f(n) \quad\mbox{for all $K$ large enough}\ .
$$
Then $f(n)\to\infty$ implies $\min_{n>K} f(n)\to\infty$ as $K\to\infty$, which proves the first statement.\\
Essentially the same argument works for the second statement, we have for all $K>0$ fixed
$$
\Big\langle \frac{\eta_x}{f(\eta_x )} \Big\rangle_{L,N} \leq\Big\langle \1_{\eta_x \leq K}\frac{\eta_x}{f(\eta_x )} \Big\rangle_{L,N} + \frac{1}{\min_{n>K} f(n)} \big\langle \1_{\eta_x >K}\eta_x \big\rangle_{L,N} \to \Big\langle \1_{\eta_x \leq K}\frac{\eta_x}{f(\eta_x )} \Big\rangle_{\rho}
$$
as $L,N\to\infty$, $N/L\to\rho$, because $\min_{n>K} f(n)$ diverges and $\langle \1_{\eta_x >K}\eta_x \big\rangle_{L,N}$ is uniformly bounded since it converges to $\rho -\rho_b$ as $K\to\infty$ \eqref{phasesep}. In that limit, the right-hand side converges to $\big\langle \eta_x /f(\eta_x )\rangle_\rho$ which implies
$$
\limsup_{L\to\infty ,N/L\to\rho}\Big\langle \frac{\eta_x}{f(\eta_x )} \Big\rangle_{L,N} \leq \Big\langle \frac{\eta_x}{f(\eta_x )} \Big\rangle_{\rho}\ .
$$
This implies in particular that $\liminf\limits_{L\to\infty ,N/L\to\rho}\Big\langle \frac{\eta_x}{f(\eta_x )}\1_{\eta_x >K} \Big\rangle_{L,N} \to 0$ as $K\to\infty$. Therefore we get the lower bound
\begin{align*}
\Big\langle \frac{\eta_x}{f(\eta_x )} \Big\rangle_{L,N} \geq \Big\langle \1_{\eta_x \leq K}\frac{\eta_x}{f(\eta_x )} \Big\rangle_{\rho}+\liminf_{L\to\infty ,N/L\to\rho}\Big\langle \frac{\eta_x}{f(\eta_x )}\1_{\eta_x >K} \Big\rangle_{L,N}\ ,
\end{align*}
which converges to $\big\langle \eta_x /f(\eta_x )\rangle_\rho$ as $K\to\infty$.
\end{proof}

This result implies in particular, that for condensing systems all higher moments $\langle \eta_x^a \rangle_{L,N}$ with $a>1$ diverge in the thermodynamic limit due to contributions from the condensed phase. Lower moments with $a<1$ converge to $\langle \eta_x^a \rangle_\rho$, and the first moment with $a=1$ is the boundary case, converging to a strictly larger value $\rho >\rho_b =\langle \eta_x \rangle_\rho$ than the bulk density. We stress again that we have only used Definition \ref{cdef} and weak convergence of single-site marginals of the canonical measures to derive these results. So they hold very generally, and do not depend on the existence of stationary product measures or any other particular structure.

\section{Some details on dynamics and Monte Carlo simulations\label{appb}}

Heuristic results for TA dynamics of the inclusion process \cite{cao2014dynamics} show that the equilibration time scales like $L/d$, and is dominated by a coarsening process with a transport limited mass exchange dynamics between isolated clusters: On a time scale of order $1$ the mass in the system concentrates on isolated cluster sites which are separated by at least one empty site. Each cluster of size $m$ then performs an effective totally asymmetric random walk with rate $dm$. So larger clusters move faster and overtake smaller ones, and during the overtake both clusters exchange mass. This leads to fluctuations in cluster sizes and drives the coarsening process, where smaller clusters disappear and the average cluster size grows as a power law in time. From the point of view of an individual cluster, coarsening determines the time scale $\tau_a$ on which it aggregates a macroscopic amount of mass, and on the fragmentation time scale $\tau_f$ it loses a non-zero mass fraction which forms a new cluster on a previously empty site. For TA dynamics, the latter only happens if during a step when a cluster extends over two sites (which takes only a time fraction of order $d$), a further particle breaks away, which happens again at rate proportional to $d$ (see discussion in \cite{cao2014dynamics} for more details). In summary both time scales are
\[
\tau_a =L/d\quad\mbox{and}\quad \tau_f = d^{-2}\ ,
\]
and we see that they agree exactly in the case $dL\to\alpha\in (0,\infty)$, leading to a balance of aggregation and fragmentation for macroscopic clusters at stationarity, and the interesting hierarchical structures of Theorem \ref{thm1}. If $dL\to 0$ then $\tau_a \ll\tau_f$ and the balance cannot be reached, rather the system saturates in a single remaining cluster consistent with complete condensation results Proposition \ref{propc}. On the other hand if $dL\to\infty$, fragmentation dominates with $\tau_f \ll \tau_a$ for macroscopic clusters, and a balance is reached at sizes of scale $1/d$ instead (consistent with Theorem \ref{interthm}), which includes the case of no condensation with $d=O(1)$. This heuristic provides useful insight on the level of the dynamics into our rigorous results which only depend on the form of the stationary distributions \eqref{canme}, and also implies that TA dynamics have to be simulated on times of order $\tau_a = L/d$ to reach stationarity.

\RestyleAlgo{boxruled}
\begin{algorithm}
\caption{Inclusion process \eqref{ipgen} on a complete graph (CG dynamics)\label{alg1}}
\textbf{Parameters} $L$ size of lattice $\Lambda$; $N$ \# of particles; $d>0$; $t$ simulation time\;
\textbf{Initialize} particle locations $\sigma_i \sim U(\Lambda )$, $i=1,\ldots ,N$ i.i.d. uniform\;
\smallskip
 \While{$s<t$}{
  pick particle $i\sim U\big( [1..N]\big)$ uniformly at random\;
  \eIf{$R\sim U\big( [0,1)\big)<dL/(dL+N)$}{
   $\sigma_i \leftarrow U(\Lambda )$;
   }{
  pick particle $j\sim U\big( [1..N]\big)$ uniformly at random\;
  $\sigma_i \leftrightarrow \sigma_j$ exchange positions\;
  }
 $s\leftarrow s+\frac{1}{N(dL+N)}$;
  }
 \textbf{Output} $\eta_x =\sum_i \delta_{\sigma_i ,x}$ for $x=1,\ldots ,L$\;
 \bigskip
\end{algorithm}

A similar argument can be made for the complete graph geometry, where the dynamics is entirely different. Cluster sites are in direct contact, and exchange single particles with a rate of order $m^2 /L$, where we understand $m\gg 1$ to be a 'typical' cluster size. Since the exchange is symmetric, it takes of order $m^2$ exchange events to change cluster sizes by a finite fraction, leading to
\[
\tau_a =\frac{L}{m^2} m^2 =L\quad\mbox{and}\quad\tau_f =\frac{1}{dm} m=\frac{1}{d}\ .
\]
The fragmentation time scale $\tau_f$ follows since particles jump onto empty sites with rate $dm$ and of order $m$ jumps are needed to fragment a finite fraction of a cluster's mass. Here we used that due to $m\gg 1$ cluster sites only cover a vanishing volume fraction. Even though both time scales are different from TA dynamics, an aggregation fragmentation balance is again reached for $dL\to\alpha$. Since we only care about the mass distribution and not the spatial location of clusters, equilibration time is now faster of order $\tau_a =L$. This is a crucial difference to TA dynamics, where the coarsening process is transport limited and clusters have to move in order to exchange particles. Due to the particular form of the jump rates for the inclusion process \eqref{ipgen}, CG dynamics can be implemented in a rejection-based algorithm summarized in Alg. \ref{alg1}, and this provides a very simple and efficient way to produce Monte Carlo samples from the distribution $\pi_{L,N}$ \eqref{canme}.


\end{document}